\newtheorem{thm}{Theorem}[section]
\newtheorem*{thm*}{Theorem}
\newtheorem{cor}[thm]{Corollary}
\newtheorem*{cor*}{Corollary}
\newtheorem{lemma}[thm]{Lemma}
\newtheorem{prop}[thm]{Proposition}
\theoremstyle{definition}
\newtheorem{defn}[thm]{Definition}
\newtheorem{remark}[thm]{Remark}
\newtheorem{exam}[thm]{Example}
\newcommand\blfootnote[1]{%
  \begingroup
  \renewcommand\thefootnote{}\footnote{#1}%
  \addtocounter{footnote}{-1}%
  \endgroup
}
\newcommand{\calH}{\mathcal{H}}
\newcommand{\calK}{\mathcal{K}}
\newcommand{\bb}[1]{\mathbb{#1}}
\newcommand{\C}{\text{C}^*_r}
\newcommand{\acts}{\curvearrowright}
\newcommand{\normal}{\trianglelefteq}
\begin{document}

\title[]{Exactness vs C*-exactness for certain non-discrete groups}
\author[]{Nicholas Manor}

\maketitle

\blfootnote{2010 \textit{Mathematics Subject Classification}. Primary 22D25, 46L06.}
\blfootnote{Author supported by NSERC Grant Number 401226864.}

\begin{abstract}
It is known that exactness for a discrete group is equivalent to C*-exactness, i.e., the exactness of its reduced C*-algebra. The problem of whether this equivalence holds for general locally compact groups has recently been reduced by Cave and Zacharias to the case of totally disconnected unimodular groups.

We prove that the equivalence does hold for a class of groups that includes all locally compact groups with reduced C*-algebra admitting a tracial state. As a consequence, we present original proofs that totally disconnected locally compact (tdlc) invariant-neighbourhood (IN) groups and a class of groups introduced by Suzuki satisfy the equivalence problem, without using inner amenability.
\end{abstract}

\section{Introduction}

The property of exactness for locally compact groups has received much attention since it was first introduced and studied by Kirchberg and Wassermann in \cite{kirchberg,kw1,kw2}. In the setting of discrete groups, the theory of exactness is very well developed and connections to dynamics and coarse geometry have long been known \cite{delaroche1,ozawa}. It is also known for a discrete group $G$ that Kirchberg and Wassermann's original definition is equivalent to the apparently weaker condition (here called C*-exactness) that the reduced C*-algebra $\C(G)$ is exact \cite[Theorem 5.2]{kw1}.

More recently, dynamical \cite{delaroche1,bcl} and coarse geometric \cite{bcl} characterizations of exactness have been found for locally compact groups. However, it remains a major open problem to determine whether the equivalence of exactness and C*-exactness holds in general, and the problem has recently been reduced to the case of totally disconnected locally compact (tdlc) unimodular groups \cite{cz}. In other words, if there is an example of a non-exact locally compact group $G$ with exact reduced C*-algebra $\C(G)$, then there is necessarily a tdlc unimodular such group.

In this note we prove the following result (Theorem \ref{trace}), which is in analogy with one direction of Ng's characterization of amenability \citep[Theorem 8]{ng}, stating that a locally compact group $G$ is amenable if and only if $\C(G)$ admits a tracial state and is nuclear.

\begin{thm*}
Let $G$ be a locally compact group. If $\C(G)$ admits a tracial state and is exact, then $G$ is exact.
\end{thm*}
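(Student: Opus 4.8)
The plan is to show that $G$ acts topologically amenably on a compact Hausdorff space, which by the dynamical characterization of exactness for locally compact groups \cite{delaroche1,bcl} is equivalent to exactness of $G$; the natural target is the action of $G$ on $\beta^{lu}G$, the spectrum of the bounded left-uniformly continuous functions on $G$. The entire point of the argument is that the tracial state $\tau$ on $\C(G)$ is made to play the role of the canonical tracial state $a\mapsto\langle a\delta_e,\delta_e\rangle$ that is automatically available for a discrete group and that drives the Kirchberg--Wassermann proof of the discrete equivalence \cite[Theorem~5.2]{kw1}; the theorem asserts that this one substitute already suffices.

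Concretely, I would proceed as follows. Let $H_\tau$ be the GNS Hilbert space of $\tau$, with its cyclic tracial vector $\eta$; this replaces $\ell^2(G)$ and the vector $\delta_e$ of the discrete case, and $\pi_\tau$ extends normally to the bidual $\C(G)''$, so that the left regular representation and the finite-dimensional data below act on $H_\tau$. Exactness of $\C(G)$ says it embeds into a nuclear C*-algebra, hence (by Kirchberg's characterization) admits a net of unital completely positive maps factoring approximately through matrix algebras; feeding this through the GNS construction yields a net of maps $H_\tau\to H_\tau$ that factor through finite-dimensional spaces and approximate the identity in the $\tau$-topology. Using the trace, one converts these into a net of positive-definite, compactly supported fields $m_i\colon G\to\cl{P}(G)$ (the appropriate kind of positive-type kernel on $G$) that are asymptotically $G$-equivariant uniformly on compacta, and then truncates and normalizes so that the induced fields on $\beta^{lu}G$ exhibit the topological amenability of the action. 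In the totally disconnected case one can organize all of this concretely around a compact open subgroup $K$: the corner $p_K\C(G)p_K$, where $p_K=\mu(K)^{-1}1_K\in\C(G)$ is a projection once $\tau(p_K)>0$ (always achievable, since the $p_K$ form an increasing approximate unit so that $\tau(p_K)\to1$), is the exact reduced C*-algebra of the Hecke pair $(G,K)$, inherits a tracial state from $\tau$, and one then works with the discrete $G$-set $G/K$ in place of $\beta^{lu}G$; this is the form in which the corollaries for tdlc IN groups and Suzuki's class are obtained.

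The step I expect to be the main obstacle is ensuring that the fields produced from the trace are genuinely continuous, i.e., that every estimate is uniform in the group variable, so that the $m_i$ define continuous maps on $\beta^{lu}G$ (or on $\beta(G/K)$) rather than merely pointwise objects. In the discrete case this is free: the canonical trace is literally evaluation at $e$, and $G$ acts homogeneously by translation. Here $\tau$ is an abstract tracial state that need not be the canonical one, need not extend normally to the group von Neumann algebra, and, crucially, $\eta$ and $H_\tau$ do not transform simply under translation, so the homogeneity underlying the uniform estimates has to be reconstructed by hand — and it is exactly the trace identity $\tau(ab)=\tau(ba)$, a vestige of conjugation-invariance, that makes this reconstruction possible. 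Once the amenability data have been assembled for every compact subset of $G$ and every tolerance, exactness of $G$ follows from the cited dynamical characterization, completing the proof.
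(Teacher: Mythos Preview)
Your proposal outlines a direct analytic strategy---adapting the Kirchberg--Wassermann argument by using the GNS space of an arbitrary tracial state in place of $\ell^2(G)$ and $\delta_e$---but you yourself identify the gap and do not close it: the passage from matrix-factorization data on $H_\tau$ to \emph{continuous} positive-type fields $m_i\colon G\to\cl{P}(G)$ with the required uniformity is not explained, and nothing in your sketch shows how the trace identity alone recovers the translation-homogeneity that in the discrete case comes for free from the canonical trace. The Hecke-pair variant you mention for tdlc groups is more concrete but equally unfinished. As written this is a programme, not a proof.

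The paper takes a completely different and far shorter route. It invokes the structural theorem of Kennedy--Raum: $\C(G)$ admits a tracial state if and only if $G$ has an open normal amenable subgroup $N$. Given such $N$, amenability of $N$ yields a surjection $\C(G)\twoheadrightarrow\C(G/N)$, so C*-exactness passes to the \emph{discrete} quotient $G/N$; the discrete equivalence of Kirchberg--Wassermann then makes $G/N$ exact, and since $N$ is amenable (hence exact) and exactness is closed under extensions, $G$ is exact. The trace is never used analytically---its entire role is to certify, via Kennedy--Raum, enough structure to reduce to the discrete theorem as a black box. Your approach, if it could be completed, would be interesting precisely because it would bypass Kennedy--Raum; but that is a substantially harder project than what the statement actually requires, and the obstacle you flag is real.
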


We give an example showing that the converse assertion, that if $G$ is exact then $\C(G)$ admits a tracial state and is exact, is not true. However, by weakening our hypotheses we get the following more powerful result (Theorem \ref{maintheorem}).

\begin{thm*}
Let $G$ be a locally compact group, and $(H_i)_{i \in I}$ be a family of open subgroups with the following conditions.
\begin{itemize}
\item For every $i$, there is a tracial state on $\C(H_i)$.
\item The union $\bigcup_{i \in I} H_i$ is equal to $G$.
\end{itemize}
If $G$ is C*-exact, then it is exact.
\end{thm*}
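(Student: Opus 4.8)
To prove Theorem~\ref{maintheorem}, the plan is to reduce to Theorem~\ref{trace} one subgroup at a time and then to globalize, using C*-exactness of $G$ a second time. First, if $H\le G$ is open then extension by zero embeds $C_c(H)$ into $C_c(G)$ and extends to an isometric inclusion $\C(H)\hookrightarrow\C(G)$ which admits a conditional expectation (restriction of functions to the clopen set $H$); in particular $\C(H)$ is a C*-subalgebra of $\C(G)$. Since exactness passes to C*-subalgebras, C*-exactness of $G$ forces every $\C(H_i)$ to be exact. As $\C(H_i)$ also carries a tracial state, Theorem~\ref{trace} then shows that each $H_i$ is exact. So far this is routine.

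The remaining step --- deducing exactness of $G$ from the open cover by the exact subgroups $H_i$ --- is the substantive one, and it cannot be purely formal: a non-exact discrete group is the union of its cyclic, hence open and exact, subgroups, so ``union of open exact subgroups'' by itself does not force exactness, and C*-exactness of $G$ must be genuinely re-used here. I would therefore not invoke Theorem~\ref{trace} as a black box at this point, but rather re-run its proof ``locally''. Whatever witness of exactness that argument distils from ``C*-exact together with a tracial state'' --- an amenable action on a compact $G$-space, or a property-A-type net of probability measures on $G$ --- I would build inside each $H_i$, obtaining local witnesses $\xi_i$, and then splice these into a single witness for $G$. This is where $\bigcup_i H_i=G$ and openness enter: given a compact $K\subseteq G$ and $\varepsilon>0$, finitely many $H_{i_1},\dots,H_{i_n}$ cover $K$; in the totally disconnected case (the essential one) one may choose a compact open subgroup $U\subseteq\bigcap_k H_{i_k}$, so that $K$ meets only finitely many cosets of $U$ and each such coset lies in a single $H_{i_k}$; a partition of unity subordinate to the resulting finite clopen decomposition of $K$ then lets one patch $\xi_{i_1},\dots,\xi_{i_n}$ together on $K$, with C*-exactness of $G$ re-entering to keep the relevant reduced-norm estimates under control as the pieces are recombined.

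I expect the patching across overlaps to be the main obstacle. On an intersection $H_i\cap H_j$ no tracial state is assumed, so the local witnesses $\xi_i$ and $\xi_j$ need not agree there, and the partition-of-unity splicing must be carried out so that the error it introduces is controlled in the reduced C*-norm, and uniformly over compact sets, not merely pointwise. Making this quantitative step go through --- presumably by selecting the $\xi_i$ from a single coherent family, or by an averaging/perturbation argument near the overlaps --- is the heart of the proof.
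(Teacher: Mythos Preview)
Your first paragraph is correct and matches the paper: openness of each $H_i$ gives $\C(H_i)\subseteq\C(G)$, so each $H_i$ inherits C*-exactness, and together with the assumed tracial state Theorem~\ref{trace} makes each $H_i$ exact.

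The globalization step is where your proposal diverges from the paper and stalls. From the cyclic-subgroup example you infer that C*-exactness of $G$ must be invoked a second time, and you propose to extract local amenability or property-A witnesses on each $H_i$ and patch them across overlaps. The paper does neither of these things. It works directly with the definition of exactness via crossed products. Given a short exact sequence $0\to I\to A\xrightarrow{q} B\to 0$ of $G$-C*-algebras, one must show $\ker q_G=I\rtimes_r G$ inside $A\rtimes_r G$. Openness of $H_i$ gives $A\rtimes_r H_i\subseteq A\rtimes_r G$, and $\bigcup_i H_i=G$ makes $\bigcup_i(A\rtimes_r H_i)$ dense in $A\rtimes_r G$. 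The only tool needed is then a general C*-algebraic lemma (\cite[II.8.2.4]{blackadar}, stated as \thref{idealApproximation} in the paper): if $A=\overline{\bigcup_i A_i}$ for C*-subalgebras $A_i$, then any closed ideal $J\trianglelefteq A$ satisfies $J=\overline{\bigcup_i(J\cap A_i)}$. Applied with $J=\ker q_G$, and using that $\ker q_G\cap(A\rtimes_r H_i)=\ker q_i=I\rtimes_r H_i$ by exactness of $H_i$, this yields $\ker q_G=\overline{\bigcup_i I\rtimes_r H_i}\subseteq I\rtimes_r G$. There is no second appeal to C*-exactness of $G$, no witnesses, and no patching.

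Your cyclic-subgroup example is instructive, but it points to a different obstruction than you think. What fails there is not a norm estimate but the density premise itself: the family $\{\langle g\rangle:g\in G\}$ is not directed, and $\bigcup_g A\rtimes_r\langle g\rangle$ is not even a linear subspace of $A\rtimes_r G$, let alone dense (e.g.\ $u_a+u_b\in\C(\bb{F}_2)$ cannot be approximated by any element supported on a single cyclic subgroup). So the ideal-approximation lemma simply does not fire. In the paper's actual applications the family $(H_i)$ is directed (an increasing chain, in the Suzuki case), and then every compactly supported element of $A\rtimes_r G$ already lies in some $A\rtimes_r H_i$, making the density immediate. The upshot is that once the $H_i$ are known to be exact, the passage to $G$ is a soft inductive-limit argument on the crossed products; the hard analysis of overlapping local witnesses that you anticipate is not needed.
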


Previously, it was only known that exactness and C*-exactness are equivalent for groups with property (W) \citep[Theorem 7.3]{delaroche1}, and property (W) was later shown to be equivalent to inner amenability \citep[Theorem 3.5]{crannTanko}. We describe a class of non-inner amenable groups to which our results apply, hence expanding the class of groups for which the equivalence is known.

In section 2 we recall some basic definitions and results on exactness, and state some needed results on induced representations. In section \ref{main section} we prove the two results stated in the introduction. In section 4 we study examples, and in particular apply our tools to classes of tdlc unimodular groups. In subsection \ref{exactness of suzuki groups} we study a class of groups first considered by Suzuki \cite{suzuki}, where examples of non-discrete C*-simple groups were constructed. In subsection \ref{exactness for IN groups} we study tdlc groups which admit a conjugation invariant neighbourhood of the identity. These are precisely those tdlc groups whose reduced C*-algebras have non-zero center \cite[Corollary 1.2]{center}. We provide new structural results for this class, and resolve the equivalence problem for these groups without using inner amenability. For more results on invariant neighbourhood (IN) groups, see \cite{forrest2017existence}, \cite{iwasawa} and \cite{palmer}. In subsection 4.3 we show how to produce locally compact groups which are not inner amenable, but to which the methods in this paper apply. In subsection 4.4 we study a class of non-examples arising as automorphisms of infinite trees. These examples show that our results do not solve the equivalence problem for the rich class of locally compact groups of automorphisms of graphs.

\medskip

\textbf{Acknowledgments.} The author would like to thank Matthew Kennedy, Sven Raum, Nico Spronk, and the anonymous referee for helpful comments and stimulating discussions.

\section{Preliminaries}

\subsection{Exactness}

Recall \cite{kw1} that a locally compact group $G$ is called \textit{exact} if for every short exact sequence $0 \to I \to A \to B \to 0$ of $G$-C*-algebras, the corresponding sequence of reduced crossed products is also exact. That is to say,
\[0 \to I \rtimes_r G \to A \rtimes_r G \to B \rtimes_r G \to 0 \]
is an exact sequence of C*-algebras. Recall also that a C*-algebra $C$ is called exact if for every short exact sequence $0 \to I \to A \to B \to 0$ of C*-algebras, the corresponding sequence of spatial tensor products
\[ 0 \to I \otimes C \to A \otimes C \to B \otimes C \to 0 \]
is also exact. In the case where $I$, $A$ and $B$ are all equipped with the trivial action of $G$, the sequence of crossed products becomes
\[ 0 \to I \otimes \C(G) \to A \otimes \C(G) \to B \otimes \C(G) \to 0. \]
Hence, exact groups necessarily have exact reduced C*-algebra. We will say $G$ is \textit{C*-exact} if $\C(G)$ is an exact C*-algebra.

A priori, exactness appears to be strictly stronger than C*-exactness: there is no reason to expect the trivial action to capture all information about the exactness of $G$. Somewhat surprisingly, the two notions coincide for discrete groups \cite[Theorem 5.2]{kw1}.

\begin{thm}\thlabel{discrete}
If $G$ is a C*-exact discrete group, then it is exact.
\end{thm}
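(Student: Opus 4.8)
This is the Kirchberg--Wassermann theorem \cite[Theorem 5.2]{kw1}; the cleanest route I know runs through the dynamical characterization of exactness. By work of Anantharaman-Delaroche and Ozawa \cite{delaroche1,ozawa}, a discrete group $G$ is exact if and only if its translation action $G \acts \beta G$ on the Stone--\v{C}ech compactification is topologically amenable, so it is enough to prove the implication ``$\C(G)$ exact $\implies G \acts \beta G$ amenable''. Unwinding the definition of amenability of this particular action (using $C(\beta G) = \ell^\infty(G)$), this amounts to producing, for each finite $F \subseteq G$ and each $\varepsilon > 0$, a finitely supported positive-definite kernel $k \colon G \times G \to \mathbb{C}$ --- meaning $k(s,t) = 0$ whenever $s^{-1}t$ avoids some finite subset of $G$ --- with $\|k\|_\infty \le 1$ and $|k(s,t) - 1| < \varepsilon$ for all $s,t$ with $s^{-1}t \in F$. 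Thus the whole problem is to manufacture such kernels out of the hypothesis that $\C(G)$ is exact.

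For this I would invoke Kirchberg's description of exactness as nuclear embeddability \cite{kirchberg}: exactness of $\C(G)$ says precisely that the defining representation $\C(G) \hookrightarrow \mathcal{B}(\ell^2(G))$ is a nuclear map, so for the given $F$ and $\varepsilon$ there are $n \in \mathbb{N}$ and completely positive contractions $\phi \colon \C(G) \to M_n$ and $\psi \colon M_n \to \mathcal{B}(\ell^2(G))$ with $\|\psi\phi(\lambda_g) - \lambda_g\| < \varepsilon$ for every $g \in F$ (we may assume $e \in F$). Let $W \colon \ell^2(G) \to \mathbb{C}^n \otimes \mathcal{H}_0$ be a Stinespring isometry for $\psi$, so that $\psi(x) = W^*(x \otimes 1)W$ and $\|W\| \le 1$, and set $\Xi(s) := W\delta_s$, a family of vectors in the unit ball of $\mathbb{C}^n \otimes \mathcal{H}_0$. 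Taking matrix coefficients against the canonical basis $(\delta_s)_{s \in G}$ --- which $\lambda_g$ simply permutes --- the estimates $\psi\phi(\lambda_g) \approx \lambda_g$ become $\langle (\phi(\lambda_g) \otimes 1)\,\Xi(t),\, \Xi(s) \rangle \approx 1$ when $s = gt$ and $\approx 0$ otherwise, for $g \in F$. In particular, taking $g = e$ shows the $\Xi(s)$ are near-unit vectors, and taking $s = gt$ and using Cauchy--Schwarz gives $(\phi(\lambda_g)\otimes 1)\Xi(t) \approx \Xi(gt)$ in norm: an approximately covariant family of near-unit vectors, albeit twisted by the \emph{contractions} $\phi(\lambda_g)$ rather than by unitaries. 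From such a family one then reads off the desired kernel after (i) ``untwisting'', i.e.\ arranging that the relevant operators act essentially unitarily on the relevant vectors, using $\phi(\lambda_g)^*\phi(\lambda_g) \le \phi(1) \le 1$ together with the norm estimates above, and (ii) truncating each section $s \mapsto \Xi(s)$ to a finitely supported one so that the resulting Gram kernel acquires finite propagation.

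The two steps (i) and (ii) --- the passage from the soft operator-algebraic approximation back to hard combinatorial/geometric data on $G$ --- are where the real work lies, and I would expect (ii), the uniform truncation, to be the main obstacle: the factorization through $M_n$ carries no a priori control on propagation, so one must exploit the approximate covariance just obtained to localize the vectors $\Xi(s)$ near $s$ without destroying positive-definiteness or the estimate on the $F$-tube, and do so uniformly in $s \in G$. This is exactly the point where discreteness of $G$ is essential: ``finitely supported'' is an honest constraint on a section over $G$, and the orbit of the basis $(\delta_s)$ under $\{\lambda_g : g \in G\}$ is free, so that the bookkeeping --- which I have suppressed --- can be carried out. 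Once the kernels $k$ have been produced for all $F$ and $\varepsilon$, amenability of $G \acts \beta G$ follows, and hence, by the dynamical characterization cited above, so does exactness of $G$.
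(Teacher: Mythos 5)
The paper does not actually prove this statement: Theorem \ref{discrete} is quoted as a black box from Kirchberg--Wassermann \cite[Theorem 5.2]{kw1}, so a citation was all that was required, and your opening sentence already does that. The route you then sketch --- reduce exactness of $G$ to amenability of $G \acts \beta G$, characterize that amenability by positive definite kernels supported in tubes of finite width and close to $1$ on a prescribed tube, and manufacture such kernels from nuclear embeddability of $\C(G)$ --- is the standard Ozawa/Anantharaman-Delaroche argument \cite{ozawa,delaroche1}, which is genuinely different from Kirchberg--Wassermann's original crossed-product proof; both are legitimate, and your skeleton (nuclear embeddability, Stinespring, matrix coefficients against the basis $(\delta_s)$) is the right frame for it.

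However, as a proof the write-up has a genuine gap, and you flag it yourself: steps (i) (``untwisting'') and (ii) (truncation to finite propagation) are not carried out, and they are the entire content of the implication ``$\C(G)$ exact $\implies G \acts \beta G$ amenable''; everything before them is routine. Moreover, the plan you propose for closing the gap --- truncating the Stinespring sections $s \mapsto \Xi(s) = W\delta_s$ and repairing positive-definiteness afterwards --- is not how the difficulty is resolved in the literature, and it is not clear it can be made to work uniformly in $s$. The standard fix avoids both of your steps: first, using Arveson extension and a convexity (point-weak to point-norm) argument, one may assume the ccp map $\phi$ into $M_n$ factors through the compression $x \mapsto PxP$, where $P$ is the finite-rank projection onto the span of $\{\delta_t : t \in E\}$ for some finite $E \subseteq G$; then one takes the kernel $k(s,t) = \langle \psi\phi(\lambda_{st^{-1}})\delta_t, \delta_s\rangle$ built directly from the operator-valued kernel $(s,t) \mapsto \lambda_{st^{-1}}$ rather than from the Gram kernel of the $\Xi(s)$. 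This $k$ is positive definite because $[\lambda_{s_i s_j^{-1}}]_{i,j} = [\lambda_{s_i}\lambda_{s_j}^*]_{i,j} \geq 0$ and entrywise application of a completely positive map followed by compression along unit vectors preserves positivity; it vanishes whenever $st^{-1} \notin EE^{-1}$, since then $P\lambda_{st^{-1}}P = 0$, which gives the finite propagation for free; and it is $\varepsilon$-close to $1$ on the $F$-tube by the approximation $\|\psi\phi(\lambda_g) - \lambda_g\| < \varepsilon$, $g \in F$. With that lemma in place your argument closes, but without it the proposal is a correct strategy rather than a proof; alternatively, simply resting on the citation to \cite[Theorem 5.2]{kw1}, as the paper does, is entirely adequate here.
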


\subsection{Induced representations}

Recall that for a locally compact group $G$ and for any closed subgroup $H \le G$, for every continuous representation $\pi: H \acts \calH$ there is a canonical extension of $\pi$ to $G$, called the \textit{induced representation} and denoted by $\mathrm{Ind}_H^G(\pi): G \acts \calH \otimes L^2(G/H)$. See \cite[Appendix E]{bekka} for an overview.

\begin{exam}\label{example:induced representations}
Letting $\lambda_H$ denote the left regular representation of $H$, the induced representation $\mathrm{Ind}_H^G(\lambda_H)$ is unitarily equivalent to the left regular representation $\lambda_G$ of $G$.

Similarly, letting $1_H$ denote the trivial representation of $H$, the induced representation $\mathrm{Ind}_H^G(1_H)$ is unitarily equivalent to the left quasi-regular representation $G \acts L^2(G/H)$.
\end{exam}

The following result (\cite[Theorem F.3.5]{bekka}) is a well known permanence property of induced representations.

\begin{thm}\label{theorem:weak containment}
Let $G$ be a locally compact group and $H \le G$ a closed subgroup. Let $\pi : H \acts \calH$ and $\rho: H \acts \calK$ be two representations of $H$ and $\pi$ weakly contained in $\rho$. Then the induced representation $\mathrm{Ind}_H^G(\pi)$ is weakly contained in $\mathrm{Ind}_H^G(\rho)$.
\end{thm}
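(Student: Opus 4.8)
The plan is to realise the two induced representations concretely and reduce, via standard stability properties of weak containment, to a single matrix-coefficient computation. Recall that $\sigma_1 \prec \sigma_2$ means that every function of positive type associated with $\sigma_1$ is a limit, uniformly on compact subsets of $G$, of finite sums of functions of positive type associated with $\sigma_2$ (see \cite[Appendix F]{bekka}); that weak containment is stable under direct sums and under passing to subrepresentations; and that induction commutes with direct sums. Writing $\pi = \bigoplus_j \pi_j$ with each $\pi_j$ cyclic, we have $\pi_j \prec \pi \prec \rho$ and $\mathrm{Ind}_H^G(\pi) = \bigoplus_j \mathrm{Ind}_H^G(\pi_j)$, so it suffices to treat the case where $\pi$ is cyclic, say with cyclic vector $v_0 \in \calH$.

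First I would fix a model of $\mathrm{Ind}_H^G(\pi)$ on the Hilbert space of (classes of) square-integrable $\calH$-valued functions $\xi$ on $G$ satisfying $\xi(gh) = \Delta(h)^{-1/2}\pi(h)^{-1}\xi(g)$, where $\Delta = \Delta_H/\Delta_G$, with $G$ acting by a cocycle-twisted left translation, and likewise for $\rho$. For $f \in C_c(G)$ and a vector $v$ in the appropriate space, put $\xi_{f,v}(g) := \int_H \Delta(h)^{1/2} f(gh)\,\pi(h)v\,dh$ (and similarly with $\rho$ in place of $\pi$). The map $f \mapsto \xi_{f,v_0}$ is linear, and one checks that $\xi_{f,\pi(b)v_0} = \xi_{f',v_0}$ for each $b \in C_c(H)$ and a suitable $f' \in C_c(G)$; as $v_0$ is cyclic it follows that the family $\{\xi_{f,v} : f \in C_c(G),\, v \in \calH\}$ --- whose linear span is dense in the induced space --- lies in the closed linear span of $\{\xi_{f,v_0} : f \in C_c(G)\}$, so that $\{\xi_{f,v_0} : f \in C_c(G)\}$ is itself a dense subspace. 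As functions of positive type are closed under uniform-on-compacta limits, it is then enough to approximate each diagonal coefficient $g \mapsto \langle \mathrm{Ind}_H^G(\pi)(g)\,\xi_{f,v_0},\,\xi_{f,v_0}\rangle$ by finite sums of functions of positive type of $\mathrm{Ind}_H^G(\rho)$.

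The computational core is the following identity, valid for any representation of $H$ in place of $\pi$ and obtained by expanding the inner product on the induced space, substituting, and applying Weil's integration formula to fold one copy of $H$ into the integral over $G/H$:
\[ \langle \mathrm{Ind}_H^G(\pi)(g)\,\xi_{f,v_0},\, \xi_{f,v_0}\rangle \;=\; \int_G \int_H \Delta(h)^{1/2}\, f(g^{-1}xh)\, \overline{f(x)}\, \langle \pi(h)v_0, v_0\rangle \, dh\, dx. \]
Hence the left-hand side depends on $\pi$ only through the single function of positive type $\varphi_0(h) := \langle \pi(h)v_0,v_0\rangle$, and --- the decisive point --- as $g$ ranges over a fixed compact set $Q \subseteq G$, the integrand involves $\varphi_0$ only on the compact set $L := (\supp f)^{-1}\,Q\,(\supp f) \cap H$. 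I would then use $\pi \prec \rho$ to approximate $\varphi_0$ uniformly on $L$ by a finite sum $\sum_m \langle \rho(h)w_m, w_m\rangle$; substituting this into the identity (now applied to $\rho$) produces $\sum_m \langle \mathrm{Ind}_H^G(\rho)(g)\,\xi_{f,w_m},\,\xi_{f,w_m}\rangle$, a finite sum of functions of positive type associated with $\mathrm{Ind}_H^G(\rho)$; and since $f$ and $Q$ are fixed, the error is bounded by a constant depending only on $f$ and $Q$ times the supremum of the approximation error on $L$, so the convergence is uniform for $g \in Q$. This yields $\mathrm{Ind}_H^G(\pi) \prec \mathrm{Ind}_H^G(\rho)$.

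I expect the main obstacle to be keeping the matrix-coefficient identity honest: carrying the modular functions and the Radon--Nikodym cocycle of the quasi-invariant measure on $G/H$ correctly through the computation is routine but easy to botch, and one must check carefully that the integrand sees $\varphi_0$ only on a compact subset of $H$ --- this is exactly what converts ``weak containment, uniformly on compacta in $H$'' into the corresponding statement for $G$. A secondary point that needs care is the density of $\{\xi_{f,v_0}\}$ together with the reduction to the cyclic case, since that is what lets one express the relevant coefficients through a single function of positive type rather than through off-diagonal matrix coefficients. (One could instead avoid the explicit computation by realising $\mathrm{Ind}_H^G$ as Rieffel induction along a fixed correspondence between the full group C*-algebras of $G$ and $H$, and noting that such correspondences preserve weak containment at the level of kernels; the hands-on argument sketched here is, however, the route taken in \cite[Theorem F.3.5]{bekka}.)
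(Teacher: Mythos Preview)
The paper does not actually prove this theorem: it is simply stated as a well-known result with a citation to \cite[Theorem F.3.5]{bekka}, and no proof is given. Your proposal is a faithful sketch of precisely the argument found in that reference (realising the induced space via $\xi_{f,v}$, computing the diagonal matrix coefficients as an integral against $\langle \pi(h)v_0,v_0\rangle$, and then using that the integrand sees the positive-definite function only on a compact subset of $H$), so there is nothing to compare --- you have supplied the proof the paper merely points to.
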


We next provide a proof for a very useful application of the above result.

\begin{cor}\label{corollary:amenable subgroup}
Let $G$ be a locally compact group and $H \le G$ a closed amenable subgroup. Then the left quasi-regular representation $G \acts L^2(G/H)$ is weakly contained in the left regular representation $\lambda_G$ of $G$.
\end{cor}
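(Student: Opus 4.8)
The plan is to combine the two facts already recorded in this subsection with the Hulanicki--Reiter characterization of amenability, so that the whole argument amounts to a short chain of weak containments. First I would invoke the (standard) fact that a locally compact group $H$ is amenable if and only if the trivial representation $1_H$ is weakly contained in its left regular representation $\lambda_H$; equivalently, by Reiter's/F\o lner's condition, there is a net of unit vectors in $L^2(H)$ that is asymptotically invariant under left translation, which gives $1_H \prec \lambda_H$ at once. This is the one external input, and it is the only genuinely substantive step.

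Having $1_H \prec \lambda_H$, I would then apply the permanence property of induced representations, Theorem \ref{theorem:weak containment}, with $\pi = 1_H$ and $\rho = \lambda_H$. Since weak containment is preserved by $\mathrm{Ind}_H^G$, this produces
\[ \mathrm{Ind}_H^G(1_H) \prec \mathrm{Ind}_H^G(\lambda_H). \]

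Finally I would identify both sides via Example \ref{example:induced representations}: the left-hand representation $\mathrm{Ind}_H^G(1_H)$ is unitarily equivalent to the left quasi-regular representation $G \acts L^2(G/H)$, while $\mathrm{Ind}_H^G(\lambda_H)$ is unitarily equivalent to $\lambda_G$. As weak containment is invariant under unitary equivalence, this immediately yields that the left quasi-regular representation $G \acts L^2(G/H)$ is weakly contained in $\lambda_G$, which is the assertion.

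The main (indeed only) obstacle is the justification of $1_H \prec \lambda_H$ for amenable $H$; everything else is a formal manipulation of the results quoted above. Depending on which equivalent formulation of amenability one takes as primitive, this step is either a standard theorem to be cited or an immediate consequence of the defining F\o lner-type net, so I would simply cite the relevant reference (e.g.\ \cite{bekka}) and keep the proof to a few lines.
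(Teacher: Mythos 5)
Your proposal is correct and follows exactly the paper's argument: cite that amenability of $H$ gives $1_H \prec \lambda_H$, apply Theorem \ref{theorem:weak containment} to get $\mathrm{Ind}_H^G(1_H) \prec \mathrm{Ind}_H^G(\lambda_H)$, and identify the two sides via Example \ref{example:induced representations}. No gaps; this is the same proof.
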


\begin{proof}
Since $H$ is amenable, it is a well known fact that its trivial representation $1_H$ is weakly contained in its left regular representation $\lambda_H$. The result now follows immediately follows from Example \ref{example:induced representations} and from Theorem \ref{theorem:weak containment}.
\end{proof}

\section{Groups with open amenable radical}\label{main section}

In the following sections, we show that the equivalence of exactness and C*-exactness holds for classes of locally compact groups which properly contain all discrete groups. The following observation, while simple, is the key to bootstrapping \thref{discrete} to the classes of groups considered in this note.

\begin{prop}\thlabel{openamenable}
Let $G$ be a locally compact group with an open normal amenable subgroup. If $G$ is C*-exact then it is exact.
\end{prop}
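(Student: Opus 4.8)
The plan is to descend to the discrete quotient $G/N$, where $N$ denotes the given open normal amenable subgroup, apply \thref{discrete} there, and then lift exactness back to $G$. Since $N$ is open, $G/N$ is discrete; and since $N$ is amenable, $G$ is an extension of $G/N$ by an exact group, so once $G/N$ is shown to be exact the exactness of $G$ follows from the fact that exactness of locally compact groups is preserved under extensions. Thus everything reduces to proving that $G/N$ is C*-exact.

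The key step is to exhibit a surjective $*$-homomorphism $\C(G) \onto \C(G/N)$. Write $q \colon G \to G/N$ for the quotient map. Because $N$ is amenable, Corollary \ref{corollary:amenable subgroup} shows that the quasi-regular representation $G \acts L^2(G/N)$ is weakly contained in $\lambda_G$; and because $N$ is normal this quasi-regular representation is exactly $\lambda_{G/N}\circ q$, the left regular representation of the discrete group $G/N$ inflated along $q$. Weak containment gives $\|(\lambda_{G/N}\circ q)(f)\| \le \|\lambda_G(f)\|$ for all $f \in C_c(G)$, hence a well-defined $*$-homomorphism $\C(G) \to \C(G/N)$ sending $\lambda_G(f) \mapsto (\lambda_{G/N}\circ q)(f)$. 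Integrating a function $f \in C_c(G)$ over the cosets of the open subgroup $N$ shows that $(\lambda_{G/N}\circ q)(f) = \lambda_{G/N}(\tilde f)$ with $\tilde f(xN) = \int_N f(xn)\,dn$, and as $f$ varies over $C_c(G)$ the functions $\tilde f$ exhaust the (finitely supported) group algebra of $G/N$; so this homomorphism has dense range and is therefore onto.

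Finally I would assemble the pieces: quotients of exact C*-algebras are exact, so $\C(G/N)$ is exact, i.e.\ $G/N$ is C*-exact; by \thref{discrete}, $G/N$ is exact; and since $1 \to N \to G \to G/N \to 1$ realizes $G$ as an extension of an exact group by an amenable (hence exact) group, $G$ is exact. I expect the only delicate point to be the identification of the target algebra in the surjection $\C(G) \onto \C(G/N)$ — verifying that inflating $\lambda_{G/N}$ along $q$ and applying it to $C_c(G)$ recovers all of the group algebra of $G/N$ rather than a proper subalgebra — which is precisely where openness of $N$ is used; the remaining inputs, permanence of C*-exactness under quotients and of group exactness under extensions, are standard.
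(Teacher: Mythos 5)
Your proposal is correct and follows essentially the same route as the paper: weak containment of the quasi-regular representation $G \acts \ell^2(G/N)$ in $\lambda_G$ (via Corollary \ref{corollary:amenable subgroup}) yields a surjection $\C(G) \onto \C(G/N)$, then C*-exactness passes to the quotient, \thref{discrete} applies to the discrete group $G/N$, and exactness of $G$ follows by the extension permanence property. The only difference is that you spell out the identification of the quasi-regular representation with the inflation of $\lambda_{G/N}$ and the density of the image, details the paper leaves implicit.
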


\begin{proof}
Suppose $G$ is C*-exact, and let $N \normal G$ be open and amenable. By Corollary \ref{corollary:amenable subgroup} the left quasi-regular representation $G \acts \ell^2(G/N)$ is weakly contained in the left regular representation of $G$.

Hence there is a surjective $\ast$-homomorphism $\C(G) \to \C(G/N)$, and exactness of C*-algebras passes to quotients \cite[Corallary 9.3]{wassermannexact}, so $G/N$ is also C*-exact. But $N$ is assumed to be open, meaning the quotient $G/N$ is a discrete group. \thref{discrete} then implies that $G/N$ is an exact group, and we already know that $N$ is exact as it is amenable. Since exactness is preserved by extensions \cite[Theorem 5.1]{kw2}, we conclude that $G$ is also exact.
\end{proof}

\begin{remark}
\thref{openamenable} can be strengthened so that $N$ is not necessarily open, but has the property that $G/N$ is exact if and only if it is C*-exact. Calling a group \textit{admissible} if it satisfies the equivalence, the statement can be strengthened as follows: extensions of amenable groups by admissible groups are admissible.
\end{remark}

Locally compact groups admitting a tracial state on the reduced C*-algebra have received recent attention in \cite{forrest2017existence,kennedyraumtraces,ng}. The following corollary relates the existence of a trace to the exactness of $G$, and is in analogy with the implication $(2) \implies (1)$ in Ng's characterization of amenability \cite[Theorem 8]{ng}.

\begin{thm}\thlabel{trace}
If $\C(G)$ admits a tracial state and is exact, then $G$ is exact.
\end{thm}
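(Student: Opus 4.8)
The plan is to reduce \thref{trace} to \thref{openamenable}. Since the hypothesis that ``$\C(G)$ is exact'' is, by definition, the statement that $G$ is C*-exact, it suffices to exhibit an \emph{open} normal amenable subgroup of $G$, and the natural candidate is the amenable radical $R(G)$, i.e.\ the largest closed normal amenable subgroup (which exists, and is amenable, for any locally compact group). Thus the whole content of the theorem is the structural assertion that $\C(G)$ carrying a tracial state forces $R(G)$ to be open in $G$; granting this, $N=R(G)$ is an open normal amenable subgroup and \thref{openamenable} applies. Equivalently, in the language of the remark after \thref{openamenable}: $G$ is then an extension of the amenable group $R(G)$ by the group $G/R(G)$, which is discrete and hence admissible by \thref{discrete}, so $G$ is admissible and, being C*-exact, exact.

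To approach the structural assertion I would first extract a representation from the trace $\tau$. Passing to the (nondegenerate) GNS triple $(\pi_\tau,\calH,\xi_\tau)$ and extending $\pi_\tau$ to the multiplier algebra, set $u_g:=\overline{\pi_\tau}(\lambda_g)$; this is a continuous unitary representation of $G$ (continuity comes from strict continuity of $\lambda\colon G\to M(\C(G))$), its integrated form is $\pi_\tau$, a representation of the reduced C*-algebra $\C(G)$, so $u\prec\lambda_G$, the von Neumann algebra $M:=\pi_\tau(\C(G))''$ is generated by $u(G)$, and $\overline\tau(x):=\langle x\xi_\tau,\xi_\tau\rangle$ is a faithful normal trace on $M$. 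Then $\phi(g):=\overline\tau(u_g)$ is a continuous conjugation-invariant positive-definite function with $\phi(e)=1$; faithfulness of $\overline\tau$ identifies $H:=\{g:\phi(g)=1\}$ with $\ker u$, a closed normal subgroup; and restricting $u\prec\lambda_G$ to $H$, where $u|_H$ is trivial and $\lambda_G|_H$ is weakly equivalent to $\lambda_H$, yields $1_H\prec\lambda_H$, so $H$ is amenable. In particular $H\subseteq R(G)$, and by Corollary~\ref{corollary:amenable subgroup} the quasi-regular representation on $L^2(G/H)$ is weakly contained in $\lambda_G$, whence $\C(G)\onto\C(G/H)$ and $G/H$ is C*-exact.

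This elementary extraction is not enough on its own: the subgroup $H$ need not be open (already for $G$ compact abelian one can arrange $H=\{e\}$ with a sufficiently spread-out trace), so one cannot conclude that $G/H$ is discrete, nor that it is exact. The main obstacle is precisely to upgrade ``$H$ amenable normal'' to ``$R(G)$ open'': a single conjugation-invariant positive-definite function is too local to detect this. The clean route is to invoke the structure theory of tracial states on reduced group C*-algebras from \cite{kennedyraumtraces} (see also \cite{forrest2017existence}), which characterizes when $\C(G)$ admits a tracial state and in particular forces the amenable radical to be open — the C*-algebraic analogue of the input behind Ng's amenability theorem \cite{ng}. Once that structural fact is in hand, the argument closes exactly as in the first paragraph via \thref{openamenable}.
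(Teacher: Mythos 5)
Your proof is correct and, in its essential step, identical to the paper's: the paper simply cites the Kennedy--Raum theorem that a tracial state on $\C(G)$ is equivalent to $G$ having an open normal amenable subgroup, and then applies \thref{openamenable}, exactly as you do in your closing paragraph. The GNS construction in your middle paragraph is an honest but non-load-bearing detour, as you yourself note, since the subgroup it produces need not be open.
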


\begin{proof}
The main theorem of \cite{kennedyraumtraces} by Kennedy--Raum states that $\C(G)$ admitting a tracial state is equivalent to the existence of an open normal amenable subgroup in $G$. The result then follows from \thref{openamenable}.
\end{proof}

\begin{exam}
Exact groups do not necessarily admit a trace on their reduced C*-algebra. In particular, the converse of this theorem does not hold. Take for example $G = \text{SL}_2(\bb{R})$. Then \cite{kennedyraumtraces} implies $\C(G)$ does not have a trace since $G$ clearly does not admit an open amenable subgroup. However, we know by \cite[Theorem 6.8]{kw2} that connected groups are always exact, hence $G$ is exact.
\end{exam}

The author would like to thank Yemon Choi for pointing out that the nuclearity of $\C(\text{SL}_2(\bb{R}))$ also follows from the fact that $\text{SL}_2(\bb{R})$ is type I.

\medskip

The following theorem allows us to extend the class of groups to which our results apply.

\begin{thm}\thlabel{maintheorem}
Let $G$ be a locally compact group, and $(H_i)_{i \in I}$ a family of open subgroups with the following conditions.
\begin{itemize}
\item For every $i$, there is a tracial state on $\C(H_i)$.
\item The union $\bigcup_{i \in I} H_i$ is equal to $G$.
\end{itemize}
If $G$ is C*-exact, then it is exact.
\end{thm}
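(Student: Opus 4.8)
The plan is to bootstrap from the case of a single open subgroup with a trace --- already handled by Theorem~\ref{trace} --- to an arbitrary cover, via a reduction to compactly generated groups, and then to produce an open normal amenable subgroup and invoke Proposition~\ref{openamenable}.

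\textbf{Step 1: each $H_i$ is exact, and may be taken amenable.} Since $H_i$ is open, extension by zero identifies $C_c(H_i)$ with a $*$-subalgebra of $C_c(G)$ on which the reduced norms agree (decomposing $L^2(G)$ over the right cosets of $H_i$, $\lambda_G$ restricted to $C_c(H_i)$ is an amplification of $\lambda_{H_i}$), so $\C(H_i)$ is a C*-subalgebra of $\C(G)$. Exactness passes to C*-subalgebras, so $\C(H_i)$ is exact; together with the tracial state, Theorem~\ref{trace} gives that $H_i$ is exact, and the Kennedy--Raum theorem used there provides an open normal amenable subgroup $N_i\normal H_i$. I would then note that we may as well assume each $H_i$ is itself amenable: for $x\in H_i$ the subgroup $\langle N_i,x\rangle$ is open, contains $N_i$ as a normal subgroup, and has cyclic quotient $\langle N_i,x\rangle/N_i$, hence is amenable; these open amenable subgroups still cover $G$.

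\textbf{Step 2: reduction to the compactly generated case.} Exactness of locally compact groups is stable under directed unions of open subgroups: if $G=\bigcup_\alpha G_\alpha$ with the $G_\alpha$ open and directed, then $C_c(G,A)=\bigcup_\alpha C_c(G_\alpha,A)$ for every $G$-C*-algebra $A$ (a compact support lies in a single $G_\alpha$), so the relevant reduced crossed products by $G$ are inductive limits of those by the $G_\alpha$, and exactness of a short exact sequence of C*-algebras survives such limits. Since $G$ is the directed union of its compactly generated open subgroups $L$, and each such $L$ is C*-exact (a C*-subalgebra of $\C(G)$) and again covered by the open amenable subgroups $H_i\cap L$, it suffices to treat compactly generated $G$; a compact generating set then lies in finitely many $H_i$, so $G=\langle H_{i_1},\dots,H_{i_m}\rangle$. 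One may also assume $G$ totally disconnected: $G^\circ$ is contained in every open subgroup, hence in some $H_{i_j}$, so $G^\circ$ is amenable and normal, and exactness, C*-exactness (via Corollary~\ref{corollary:amenable subgroup}) and the covering hypothesis all descend to $G/G^\circ$.

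\textbf{Step 3 (the crux): finitely many open amenable generators.} I would induct on $m$: for $m=1$, $G=H_{i_1}$ is amenable, hence exact, and in the inductive step $G=\langle L,H\rangle$ with $L:=\langle H_{i_1},\dots,H_{i_{m-1}}\rangle$ open (so C*-exact, and exact by induction) and $H:=H_{i_m}$ open amenable. The heart of the matter is therefore: \emph{a C*-exact locally compact group generated by an open exact subgroup and an open amenable subgroup is exact}, equivalently has an open normal amenable subgroup, after which Proposition~\ref{openamenable} finishes. I expect this to be the main obstacle. The hypothesis that $\C(G)$ is exact must enter essentially --- it cannot be replaced by exactness of the pieces alone, since every finitely generated non-exact discrete group is generated by finitely many cyclic (hence amenable, hence open) subgroups. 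The tool I would reach for is the dynamical characterization of exactness, i.e.\ amenability of the $G$-action on $\beta^{lu}G$, equivalently nuclearity of $C(\beta^{lu}G)\rtimes_r G$: each $C(\beta^{lu}G)\rtimes_r H_{i_j}$ is already nuclear, being a commutative C*-algebra crossed by an amenable group, and the aim is to glue these to nuclearity of $C(\beta^{lu}G)\rtimes_r G$, with exactness of $\C(G)=\mathbb{C}\rtimes_r G$ supplying the control over the (a priori non-directed) overlaps that Step~2 could not provide. Making this gluing precise is where the work lies.
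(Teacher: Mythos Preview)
Your proposal has a genuine gap: Step~3 is not carried out, and you yourself flag it as ``the main obstacle.'' The statement you isolate there --- that a C*-exact locally compact group generated by an open exact subgroup and an open amenable subgroup is exact --- is not obvious, and the dynamical route via $\beta^{lu}G$ you sketch would need substantial work to make precise. More seriously, in passing from Step~2 to Step~3 you have discarded the covering hypothesis $G=\bigcup_i H_i$ in favour of mere generation $G=\langle H_{i_1},\dots,H_{i_m}\rangle$; as your own discrete counterexample shows, generation by open amenable subgroups is strictly weaker, so you have set yourself a harder problem than the theorem requires.

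The paper avoids this entirely by a direct argument at the level of crossed products. After establishing (as in your Step~1) that each $H_i$ is exact, one takes an arbitrary short exact sequence $0\to I\to A\xrightarrow{q}B\to 0$ of $G$-C*-algebras and shows $\ker q_G=I\rtimes_r G$ directly. Openness of $H_i$ gives an embedding $A\rtimes_r H_i\subseteq A\rtimes_r G$ on which $q_G$ restricts to $q_i$; the covering hypothesis makes $\bigcup_i A\rtimes_r H_i$ dense in $A\rtimes_r G$. An ideal-approximation lemma (\thref{idealApproximation}, i.e.\ \cite[II.8.2.4]{blackadar}) then yields $\ker q_G=\overline{\bigcup_i\ker q_i}$, and exactness of each $H_i$ identifies $\ker q_i=I\rtimes_r H_i\subseteq I\rtimes_r G$, whence $\ker q_G\subseteq I\rtimes_r G$; the reverse inclusion is elementary. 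No induction, no reduction to the compactly generated case, and no gluing of amenable actions is needed.

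Your directed-union observation in Step~2 is in fact the same mechanism, specialized to directed families; the paper's point is that the ideal lemma runs the argument for the original cover as is, so one never has to manufacture a directed family --- and hence Step~3 never arises. The replacement of the $H_i$ by open amenable subgroups in your Step~1, while correct, is likewise unnecessary: exactness of the $H_i$ is all that is used.
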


Since exactness is fundamentally a property of the ideal structure in reduced crossed products, we require the following technical fact about ideals in C*-algebras \cite[II.8.2.4]{blackadar}.

\begin{lemma}\thlabel{idealApproximation}
Let $A$ be a C*-algebra, and $(A_i)_{i\in I}$ a family of C*-subalgebras such that $\bigcup_{i \in I} A_i$ is dense in $A$. If $J$ is any closed ideal in $A$, then $J \cap \bigcup_{i \in I} A_i$ is dense in $J$.
\end{lemma}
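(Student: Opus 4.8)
The plan is to prove \thref{idealApproximation} first, then use it to deduce \thref{maintheorem}; let me sketch both, since the lemma is elementary and the theorem is the real target.

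\textbf{Proof of the lemma.} Let $a \in A$ generate the closed ideal $J$, in the sense that $J = \overline{A a A}$ (or more generally $J$ is an arbitrary closed ideal; one argues with approximate units). The key device is to fix an approximate unit $(e_\lambda)$ for $J$ that is \emph{quasicentral} in $A$: such an approximate unit exists by a standard theorem (see \cite[II.8.2.4]{blackadar}). Given $x \in J$ and $\varepsilon > 0$, first choose $\lambda$ with $\|e_\lambda x - x\| < \varepsilon/2$. Since $\bigcup_i A_i$ is dense in $A$, choose $y \in \bigcup_i A_i$ with $\|y - x\|$ small; then $e_\lambda y$ is close to $e_\lambda x$, hence close to $x$. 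The remaining point is that $e_\lambda y$ can be taken to lie in $\bigcup_i A_i \cap J$: approximating $e_\lambda$ itself by elements of $J \cap \bigcup_i A_i$ (using that $J = \overline{\bigcup_i (J\cap A_i)}$ is what we are trying to prove, so instead one approximates $e_\lambda$ within $J$ by finite sums $\sum a_k x_k b_k$ with $x_k \in J$, $a_k, b_k \in A$, then approximates all of $a_k, x_k, b_k$ by elements of some $A_i$ — taking a common $i$ is possible if the family is directed, and otherwise one simply notes $\bigcup_i A_i$ is an algebra up to norm density). I would in fact cite \cite[II.8.2.4]{blackadar} for the lemma outright, as the statement is there; the sketch above indicates why it is true.

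\textbf{Proof of the theorem.} Suppose $G$ is C*-exact; I want to show $G$ is exact. By \thref{trace}, each $H_i$ is exact, since $\C(H_i)$ has a tracial state and (being a C*-subalgebra of $\C(G)$ that is the image of the canonical inclusion, or rather $\C(H_i)$ embeds in $\C(G)$ because $H_i$ is open) is exact — here one uses that for an open subgroup $H \le G$ the reduced C*-algebra $\C(H)$ embeds as a C*-subalgebra of $\C(G)$, and subalgebras of exact C*-algebras are exact \cite[Corollary 9.3]{wassermannexact}. So the hypotheses of \thref{trace} are met for each $H_i$, giving exactness of each $H_i$. The strategy is then to test exactness of $G$ directly: take a short exact sequence $0 \to I \to A \to B \to 0$ of $G$-C*-algebras, and show that
\[ 0 \to I \rtimes_r G \to A \rtimes_r G \to B \rtimes_r G \to 0 \]
is exact. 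Exactness of this sequence always holds at the ends; the content is exactness in the middle, i.e. that the kernel $J$ of $A \rtimes_r G \to B \rtimes_r G$ equals $I \rtimes_r G$. Now the algebraic crossed product $C_c(G, A)$ is the dense union of the subspaces $C_c(H_i, A|_{H_i})$ (using $\bigcup_i H_i = G$ and that each $H_i$ is open, so $C_c(H_i,\cdot)$ sits inside $C_c(G,\cdot)$), and completing, $A \rtimes_r G$ contains each $A \rtimes_r H_i$ as a C*-subalgebra with dense union $\bigcup_i (A \rtimes_r H_i)$. The same holds for $I$ in place of $A$. Since each $H_i$ is exact, the sequence $0 \to I \rtimes_r H_i \to A \rtimes_r H_i \to B \rtimes_r H_i \to 0$ is exact, so $I \rtimes_r H_i = J \cap (A \rtimes_r H_i)$ inside $A \rtimes_r G$. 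Applying \thref{idealApproximation} with $A \rtimes_r G$ in the role of $A$, the subalgebras $A_i = A \rtimes_r H_i$, and the ideal $J$: we get that $J \cap \bigcup_i (A\rtimes_r H_i) = \bigcup_i (I \rtimes_r H_i)$ is dense in $J$. But $\bigcup_i (I \rtimes_r H_i) \subseteq I \rtimes_r G$, and $I \rtimes_r G$ is closed, so $J \subseteq I \rtimes_r G$; the reverse inclusion is automatic. Hence $J = I \rtimes_r G$, and $G$ is exact.

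\textbf{Main obstacle.} The step I expect to need the most care is identifying $A \rtimes_r H_i$ as a C*-subalgebra of $A \rtimes_r G$ and checking that $I \rtimes_r H_i = J \cap (A \rtimes_r H_i)$, i.e. that the reduced crossed product by the open subgroup behaves well with respect to both the inclusion into the crossed product by $G$ and the ideal structure. For an \emph{open} subgroup this is standard (the Haar measure restricts, $C_c(H_i,\cdot) \subseteq C_c(G,\cdot)$, and the regular representation of $G$ restricted to $H_i$ contains the regular representation of $H_i$), but it should be stated carefully, perhaps invoking that induction from an open subgroup is compatible with the reduced norms. A secondary subtlety is whether the family $(A \rtimes_r H_i)_i$ has dense union: this follows because $C_c(G,A)$ is spanned by functions supported in a single coset-translate of some $H_i$ — but since the $H_i$ need not be nested, one notes instead that any $f \in C_c(G,A)$ has compact support covered by finitely many translates $g_1 H_{i_1}, \dots, g_n H_{i_n}$, and a partition-of-unity argument writes $f$ as a sum of pieces each supported in one $g_k H_{i_k}$; translating by $g_k^{-1}$ and using that the crossed product is generated by $C_c(G)$ and $A$ lands each piece, up to the $G$-action which moves things within the algebra, in the appropriate $A \rtimes_r H_{i_k}$. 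This bookkeeping is routine but is where the "open" and "covering" hypotheses are both consumed.
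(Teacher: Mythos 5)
Your treatment of \thref{idealApproximation} ultimately defers to the citation \cite[II.8.2.4]{blackadar}, which is exactly what the paper does: it states the lemma as a known fact and gives no proof, so on that score you match. However, the sketch you offer in support of the lemma has a genuine gap, and since it is the only mathematical content you supply for this statement it is worth flagging. The problem is the step where you try to land $e_\lambda y$ in $J \cap \bigcup_i A_i$. The element $e_\lambda y$ does lie in $J$ (because $e_\lambda \in J$ and $J$ is an ideal), but it does not lie in any $A_i$; and your proposed repair --- write $e_\lambda$ as a sum $\sum_k a_k x_k b_k$ with $x_k \in J$ and then approximate $a_k, x_k, b_k$ by elements of $\bigcup_i A_i$ --- destroys membership in $J$: an element of $A_i$ that is merely norm-close to an element of $J$ need not belong to $J$ at all. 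That is precisely the difficulty the lemma exists to overcome, and neither quasicentrality of the approximate unit nor the remark that ``$\bigcup_i A_i$ is an algebra up to norm density'' addresses it. (The latter remark is also unnecessary: a correct argument only ever uses one $A_i$ at a time, so no directedness is needed.)

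For the record, the lemma has a short self-contained proof avoiding approximate units entirely. Given $x \in J$ and $\delta > 0$, pick $i$ and $y \in A_i$ with $\|x - y\| < \delta$, and let $\pi \colon A \to A/J$ denote the quotient map, so that $\|\pi(y)\| \le \delta$. Let $h_\delta \colon [0,\infty) \to [0,1]$ be the continuous function $h_\delta(t) = \max(0,\, 1 - \delta^2/t)$ for $t>0$ and $h_\delta(0)=0$, which vanishes on $[0,\delta^2]$. Then $z := y\, h_\delta(y^*y)$ lies in $A_i$ (functional calculus stays in $C^*(y^*y) \subseteq A_i$ since $h_\delta(0)=0$), lies in $J$ because $\pi(z) = \pi(y)\, h_\delta\bigl(\pi(y)^*\pi(y)\bigr) = 0$ as $\|\pi(y)^*\pi(y)\| \le \delta^2$, and satisfies $\|y - z\|^2 = \sup_{t \in \sigma(y^*y)} t\,(1-h_\delta(t))^2 \le \delta^2$. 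Hence $z \in J \cap A_i$ with $\|x - z\| \le 2\delta$, proving the lemma for an arbitrary (not necessarily directed) family. Your subsequent deduction of \thref{maintheorem} from the lemma is essentially the paper's argument and is fine.
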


We also note that if $0 \to I \to A \xrightarrow{q} B \to 0$ is an exact sequence of $G$-C*-algebras, then it is easy to see that $I \rtimes_r G \subseteq \ker q_G$. Indeed, take an arbitrary compactly supported function $f$ from the norm dense subset $C_c(G,I) \subseteq I \rtimes_r G$ and notice $q_G(f)(x) = q(f(x)) = 0$ for all $x \in G$ since $f(x) \in I = \ker q$. The inclusion $I \rtimes_r G \subseteq \ker q_G$ then follows by continuity of $q_G$. We now give the proof of \thref{maintheorem}.

\begin{proof}[Proof of \thref{maintheorem}]
Let $G$ and $(H_i)_{i\in I}$ be as in the theorem statement. Since each $H_i$ is an open subgroup we have the inclusion $\C(H_i) \subseteq \C(G)$, hence if $G$ is C*-exact then so is each $H_i$. But each $H_i$ admits a trace, thus \thref{trace} tells us that each $H_i$ is also exact. We show how exactness lifts up to $G$.

Let $0 \to I \to A \xrightarrow{q} B \to 0$ be an exact sequence of $G$-C*-algebras. Restricting each $G$-action to $H_i$, this is also exact as a sequence of $H_i$-C*-algebras. Since $H_i$ is an exact group, then
\[ 0 \to I \rtimes_r H_i \to A \rtimes_r H_i \xrightarrow{q_i} B \rtimes_r H_i \to 0 \]
is short exact. Using this fact, we will show that the corresponding sequence
\[ 0 \to I \rtimes_r G \to A \rtimes_r G \xrightarrow{q_G} B \rtimes_r G \to 0 \]
of $G$-crossed products is short exact, i.e., that $I \rtimes_r G = \ker q_G$.

\medskip

Since $\bigcup_{i \in I} H_i = G$, we have a norm dense inclusion $\bigcup_{i \in I} A \rtimes_r H_i \subseteq A \rtimes_r G$. We may then apply \thref{idealApproximation} to $\ker q_G$ to get $\overline{\bigcup_{i \in I} \ker q_i} = \ker q_G$. But the $H_i$ were shown to be exact, hence each $I \rtimes_r H_i = \ker q_i$, which gives us the inclusion $\ker q_G  \subseteq I \rtimes_r G$ as each $I \rtimes_r H_i \subseteq I \rtimes_r G$. The opposite inclusion $I \rtimes_r G \subseteq \ker q_G$ was discussed before the proof.
\end{proof}

\section{Examples and non-examples}

It was shown by Anantharaman-Delaroche in \cite{delaroche1} that exactness and C*-exactness are equivalent for inner amenable groups. Recall that a locally compact group $G$ is inner amenable if and only if there is a $G$-invariant state on $L^\infty(G)$.

In subsections 1 and 2 we show, without using inner amenability, that C*-exactness implies exactness for certain inner amenable groups. In subsection 3 we describe a class of non-inner amenable groups to which our results apply, therefore showing that our results expand the class of groups for which the equivalence problem is solved. In subsection 4 we show that certain automorphism groups of trees do not satisfy the hypotheses of \thref{maintheorem}.

\subsection{Exactness for Suzuki's groups}\label{exactness of suzuki groups}

The following result shows C*-exactness implies exactness for a class of tdlc groups considered by Suzuki in the context of C*-simplicity \cite{suzuki}. To avoid confusion, it should be noted that these are not the same as the Suzuki groups from finite group theory. The author would like to thank Yemon Choi for bringing this to his attention.

\begin{prop}\thlabel{SuzukiEquivalence}
Let $G$ be a locally compact group with a decreasing neighbourhood base $(K_n)_{n=1}^\infty$ of compact open subgroups, and an increasing sequence $(L_n)_{n=1}^\infty$ of open subgroups with the following additional properties.
\begin{itemize}
\item Each $K_n$ is a normal subgroup of $L_n$.
\item The union $\bigcup_{n=1}^\infty L_n$ is equal to $G$.
\end{itemize}
If $G$ is C*-exact, then it is exact.
\end{prop}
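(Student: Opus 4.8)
The plan is to verify that the hypotheses of Proposition \thref{SuzukiEquivalence} are a special case of the hypotheses of \thref{maintheorem}, so that the conclusion follows directly. The only gap to bridge is that \thref{maintheorem} requires a tracial state on each $\C(H_i)$, whereas here we are instead given for each $L_n$ a compact open \emph{normal} subgroup $K_n \normal L_n$. So the main content of the proof is the following observation: if $L$ is a locally compact group containing a compact open normal subgroup $K$, then $L$ has an open normal amenable subgroup, hence by the Kennedy--Raum theorem (invoked in the proof of \thref{trace}) the reduced C*-algebra $\C(L)$ admits a tracial state. Indeed $K$ itself is compact, therefore amenable, and it is open and normal in $L$ by hypothesis; so we may take $N = K$. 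Once this is established, applying \thref{maintheorem} with the family $(L_n)_{n=1}^\infty$, whose union is $G$ by hypothesis, immediately gives that C*-exactness of $G$ implies exactness of $G$.

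Concretely, I would structure the argument as follows. First, fix $n$ and note that $K_n$ is a compact group, hence amenable, and is by assumption open and normal in $L_n$; thus $L_n$ has an open normal amenable subgroup. Second, recall from the proof of \thref{trace} that by Kennedy--Raum \cite{kennedyraumtraces} the existence of an open normal amenable subgroup in a locally compact group is equivalent to the existence of a tracial state on its reduced C*-algebra; applying this to $L_n$, we obtain a tracial state on $\C(L_n)$. (Alternatively, one could cite \thref{openamenable} to see directly that each $L_n$ is admissible once $G$ — and hence each open subgroup $L_n$ — is C*-exact, but routing through \thref{maintheorem} is cleaner since it already packages the gluing argument.) Third, since the $L_n$ are open subgroups of $G$ with $\bigcup_{n=1}^\infty L_n = G$, the family $(L_n)_{n=1}^\infty$ satisfies both bullet points of \thref{maintheorem}. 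Finally, \thref{maintheorem} yields that C*-exactness of $G$ implies exactness of $G$, as desired.

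The role of the neighbourhood base $(K_n)_{n=1}^\infty$ and the decreasing/increasing structure is essentially cosmetic for this particular proof — what is actually used is only that each $L_n$ contains \emph{some} compact open normal subgroup (namely $K_n$), and that the $L_n$ exhaust $G$. The more refined structure (that the $K_n$ shrink to the identity and the $L_n$ increase) is what makes the class match Suzuki's construction in \cite{suzuki}, but it plays no further role here. I do not anticipate a genuine obstacle in this argument: the only thing to be careful about is making sure the trace-existence input is quoted in the form actually proved by Kennedy--Raum, i.e., the equivalence with an open normal amenable subgroup, rather than some weaker statement — but this is exactly the result already used inside the proof of \thref{trace}, so no new machinery is needed.
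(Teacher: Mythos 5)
Your proposal is correct and matches the paper's proof exactly: the paper likewise observes that each $L_n$ has the compact (hence amenable) open normal subgroup $K_n$, invokes Kennedy--Raum to obtain a tracial state on $\C(L_n)$, and then applies \thref{maintheorem} to the exhausting family $(L_n)$. Your remark that the decreasing/increasing structure of the $K_n$ and $L_n$ is not needed for this argument is also accurate.
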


\begin{exam}\thlabel{semidirectsuzuki}
Suzuki describes a general construction in \cite{suzuki}. For each $n \in \bb{N}$, let $\Gamma_n$ be a discrete group and let $F_n$ be a finite group acting on $\Gamma_n$ by automorphisms.

We view the direct sum $\bigoplus_{n=1}^\infty \Gamma_n$ as a discrete group, and the direct product $\prod_{n=1}^\infty F_n$ as a compact group with the product topology. Defining the action $\prod_{n=1}^\infty F_n \acts \bigoplus_{n=1}^\infty \Gamma_n$ component-wise, the semidirect product $G := ( \bigoplus_{n=1}^\infty \Gamma_n ) \rtimes \prod_{n=1}^\infty F_n$ satisfies the conditions of \thref{SuzukiEquivalence}. To see this, for each $n \in \bb{N}$ set $K_n := \prod_{k=n+1}^\infty F_k$ and $L_n := (\bigoplus_{k=1}^n \Gamma_k) \rtimes \prod_{k=1}^\infty F_k $.
\end{exam}

\begin{proof}[Proof of \thref{SuzukiEquivalence}]
Since each $L_n$ has a compact (hence amenable) open normal subgroup $K_n$, it has a trace by \cite{kennedyraumtraces}. Hence if $G$ is C*-exact then it is exact by \thref{maintheorem}.
\end{proof}

The groups of \thref{SuzukiEquivalence} are inner amenable. Indeed, for each $n$ there is a conjugation invariant mean on $L^\infty(L_n)$ given by $\phi_n(f) = \int_{K_n} f$ since $K_n$ is normal in $L_n$. Extending the $\phi_n$ to $L^\infty(G)$ and picking a weak* cluster point of the sequence $(\phi_n)_{n \geq 1}$ produces a conjugation invariant mean on $L^\infty(G)$.

Since exactness of groups is preserved by closed subgroups and by extensions, and since exactness of C*-algebras is preserved by quotients, we have $G$ is exact if and only if each $L_n/K_n$ is exact. By \thref{SuzukiEquivalence}, we know this corresponds also to $\C(G)$ being exact. In the language of \thref{semidirectsuzuki}, this means $G$ is exact if and only if each $\Gamma_n$ is exact.

\subsection{Exactness for IN groups}\label{exactness for IN groups}

We now study exactness for tdlc groups with an additional topological property: we say a locally compact group $G$ is an \textit{IN group} (invariant neighbourhood group) if there is a compact neighbourhood $U$ of the identity which is invariant under conjugation, i.e., for every $g \in G$ we have $gU g^{-1} = U$. A closely related property is when $G$ admits a neighbourhood base at the identity consisting of conjugation invariant sets. In this case we say that $G$ is a \textit{SIN group} (small invariant neighbourhood group).

IN and SIN groups have a well understood structure theory with interesting applications to their representation theory. For further information on these groups, see \cite{forrest2017existence}, \cite{iwasawa} and \cite{palmer}.

\begin{remark}
It is easy to show that an IN group (resp. SIN group) $G$ is necessarily unimodular: letting $m$ denote the Haar measure on $G$, and setting $U$ to be an invariant compact neighbourhood of the identity, we have $gU = Ug$ for all $g \in G$. Hence $\Delta(g)m(U) = m(Ug) = m(gU) = m(U)$ for all $g$, implying the modular function $\Delta$ is constantly equal to 1.
\end{remark}

From this it is clear that IN groups, hence SIN groups, are inner amenable. To see this, fix a conjugation invariant compact neighbourhood $U \subseteq G$ of the identity, and define $\phi : L^\infty(G) \to \bb{C}$ by $\phi(f) = \int_U f$.

We will make use of the following structure theorem \cite[12.1.31]{palmer}, which strongly relates IN groups to SIN groups. It was originally proved by Iwasawa in \cite{iwasawa}.

\begin{thm}\thlabel{INquotient}
Let $G$ be an IN group. Then there is a compact normal subgroup $K \normal G$ so that $G/K$ is a SIN group with the quotient topology.
\end{thm}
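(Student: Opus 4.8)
The plan is to take $K$ to be the intersection of all closed conjugation-invariant neighbourhoods of the identity, and then to check in turn that this set is compact, that it is a subgroup (this is the crux), that it is normal, and that the quotient $G/K$ is SIN. First I would fix a compact conjugation-invariant neighbourhood $V$ of $e$ — compact sets in $G$ being closed, $V$ is itself a closed conjugation-invariant neighbourhood — and set $K := \bigcap\{C : C \text{ a closed conjugation-invariant neighbourhood of } e\}$. Since any such $C$ may be replaced by $C \cap V$ without changing the intersection, one has $K \subseteq V$, so $K$ is a closed subset of a compact set, hence compact; it is conjugation-invariant as an intersection of conjugation-invariant sets, it contains $e$, and $K = K^{-1}$ because the defining family is stable under $C \mapsto C^{-1}$.

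The substantive point is that $K$ is a \emph{subgroup}. The closed conjugation-invariant neighbourhoods contained in $V$ form a filter base of compact sets, so a standard finite-intersection-property argument identifies $K^2$ with $\bigcap_C C^2$; consequently $K \cdot K \subseteq K$ reduces to the following \emph{multiplicative cofinality} assertion: for every closed conjugation-invariant neighbourhood $C$ of $e$ there is a closed conjugation-invariant neighbourhood $D$ of $e$ with $D^2 \subseteq C$. This is the step I expect to be the main obstacle, and it is exactly where the IN hypothesis is used in an essential way — continuity of multiplication alone only produces some (generally non-invariant) neighbourhood $W$ with $W^2 \subseteq C$, and the content of Iwasawa's argument is that the presence of the compact invariant neighbourhood $V$ lets one replace $W$ by an invariant refinement, e.g.\ by analysing the conjugation action $G \times V \to V$ on the compact ``fibre'' $V$. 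Granting this, $K$ is a closed subgroup, and it is normal because it is conjugation-invariant.

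Finally I would verify that $G/K$ is SIN. It is automatically an IN group, since for the quotient map $q \colon G \to G/K$ the image $q(V)$ is a compact conjugation-invariant neighbourhood of $\bar e$. Let $\bar W$ be an arbitrary neighbourhood of $\bar e$ and put $W := q^{-1}(\bar W)$, a neighbourhood of $e$ which is a union of $K$-cosets; as $q^{-1}(\bar W^{\circ})$ is open, contains $e$, and is a union of $K$-cosets, it contains $K$, so $K$ lies in the interior of $W$. The compact sets $C \cap (G \setminus W^{\circ})$, with $C$ ranging over the closed conjugation-invariant neighbourhoods of $e$ contained in $V$, form a filter base whose total intersection $K \cap (G \setminus W^{\circ})$ is empty, so by compactness some single such $C$ already satisfies $C \subseteq W^{\circ}$. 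Then $CK$ is a compact neighbourhood of $e$ which is conjugation-invariant and a union of $K$-cosets, and $CK \subseteq W$, so $q(CK)$ is a compact conjugation-invariant neighbourhood of $\bar e$ contained in $\bar W$. Since $\bar W$ was arbitrary, $G/K$ admits a neighbourhood base at the identity consisting of conjugation-invariant sets, i.e.\ $G/K$ is a SIN group; incidentally the same construction yields the converse, so that IN groups are precisely the compact-by-SIN groups.
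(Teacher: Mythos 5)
First, note that the paper does not actually prove this statement: it is quoted as a known structure theorem of Iwasawa, citing \cite{iwasawa} and \cite[12.1.31]{palmer}. So your proposal has to stand on its own, and it has a genuine gap exactly at the step you yourself flag. The peripheral parts are fine: the finite-intersection-property argument identifying $K\cdot K$ with $\bigcap_C C^2$ over the downward-directed family of compact invariant neighbourhoods $C\subseteq V$ is correct, as are compactness, symmetry and conjugation-invariance of $K$, and the verification that $G/K$ is SIN via the saturated invariant neighbourhood $CK\subseteq W$ is correct \emph{once} $K$ is known to be a compact normal subgroup. But the ``multiplicative cofinality'' assertion --- for every closed conjugation-invariant neighbourhood $C$ of $e$ there is a closed conjugation-invariant $D$ with $D^2\subseteq C$ --- is precisely where all the content of the theorem sits, and you do not prove it; writing ``granting this'' amounts to granting Iwasawa's theorem, since everything else in your outline is routine. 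The suggested repair (invariantize the non-invariant $W$ with $W^2\subseteq C$ using the compact fibre $V$) is not an argument: the natural invariantization $\bigcap_g gWg^{-1}$ fails to be a neighbourhood exactly when $G$ is not SIN, and $\overline{\bigcup_g gWg^{-1}}$ need not stay inside $C$. For a concrete illustration, take $G=\mathbb{T}^2\rtimes_A\mathbb{Z}$ with $A$ a hyperbolic toral automorphism: conjugating any small neighbourhood of $e$ by powers of $A$ produces a set whose closure is all of $\mathbb{T}^2$, so every closed invariant neighbourhood contains $\mathbb{T}^2$, and the cofinality claim holds there only because one already knows that the minimal closed invariant neighbourhood is a (compact normal) subgroup --- i.e., one verifies it by invoking the conclusion of the theorem, which shows the proposed reduction is circular in spirit unless you supply an independent proof.

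The classical proofs proceed through genuinely different structural input: for instance, the open subgroup generated by the compact invariant neighbourhood $V$ consists of elements with relatively compact conjugacy classes, and one then appeals to the structure theory of such groups (Iwasawa's original argument, or the Grosser--Moskowitz-type theory reproduced in \cite{palmer}). That is where the real work lies, and it is absent from your proposal. As written, what you have is a correct scaffold (definition of $K$, the FIP reductions, and the passage to the SIN quotient) around a missing central lemma; to complete it you must either prove the cofinality claim directly --- which I expect to be essentially as hard as the theorem itself --- or follow the cited literature.
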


\begin{remark}\thlabel{INasextension}
\thref{INquotient} tells us that IN groups are extensions of compact groups by SIN groups. In fact, the converse holds as well. That is to say, if $G$ has a compact normal subgroup $K \normal G$ so that $G/K$ is a SIN group, then $G$ is an IN group.

To see this, note that the map $q : G \to G/K$ is proper as the quotient is by a compact subgroup. So we may fix any invariant compact neighbourhood $U$ of the identity in $G/K$, and the preimage $q^{-1}(U)$ is an invariant compact neighbourhood of the identity in $G$.
\end{remark}

\begin{lemma}\thlabel{normalSubgroupSIN}
Let $G$ be a tdlc SIN group. Then $G$ admits a neighbourhood base at the identity consisting of compact open normal subgroups.
\end{lemma}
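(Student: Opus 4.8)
The plan is to combine van Dantzig's theorem with the SIN hypothesis. Recall that, being totally disconnected and locally compact, $G$ already has a neighbourhood base at the identity consisting of compact open subgroups. Hence it suffices to show that every compact open subgroup $K \le G$ contains a compact open subgroup of $G$ that is \emph{normal} in $G$; then shrinking $K$ inside a given neighbourhood of the identity produces the desired base.

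So I would fix a compact open subgroup $K$ and invoke the SIN property: there is a conjugation-invariant neighbourhood $W$ of the identity contained in $K$. Replacing $W$ by its interior (which is again conjugation-invariant, since conjugation by any $g \in G$ is a homeomorphism carrying $W$ onto itself, hence the interior of $W$ onto itself) I may assume $W$ is open. Now set $N := \langle W \rangle$, the subgroup of $G$ generated by $W$. Since $W$ is an open neighbourhood of the identity contained in $N$, the subgroup $N$ is open, hence closed in $G$. Since $W \subseteq K$ and $K$ is a subgroup, every word in $W \cup W^{-1}$ lies in $K$, so $N \subseteq K$; thus $N$ is a closed subset of the compact group $K$ and is therefore itself compact. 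Finally, conjugation by any $g \in G$ is an automorphism of $G$ taking $W$ onto $W$, so it takes $N = \langle W \rangle$ onto $\langle W \rangle = N$; hence $N \normal G$. This gives a compact open normal subgroup of $G$ inside $K$, completing the proof.

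There is no substantial obstacle here: the argument is short, and the only points requiring a moment's care are that the interior of a conjugation-invariant set is conjugation-invariant (immediate, since conjugations are homeomorphisms) and that an open subgroup of a compact group is compact (open subgroups are closed, and closed subsets of compact sets are compact). The conceptual content is simply that in a tdlc group one can always pass from a conjugation-invariant \emph{neighbourhood} of the identity to a conjugation-invariant \emph{subgroup} by generation, while staying inside a prescribed compact open subgroup.
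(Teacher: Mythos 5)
Your proof is correct and follows essentially the same route as the paper: fix a compact open subgroup, use the SIN property to place a conjugation-invariant neighbourhood inside it, generate an open (hence closed) normal subgroup that remains inside, and conclude compactness from being a closed subgroup of a compact group. The only cosmetic difference is that the paper first shrinks the invariant neighbourhood to a compact open subgroup and takes the normal subgroup generated by its conjugates, whereas you generate directly from the (interior of the) invariant neighbourhood; both work.
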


\begin{proof}
We will show that every compact open subgroup contains a compact open normal subgroup. Since the compact open subgroups form a neighbourhood basis this will complete the proof.

Let $K_1 \leq G$ be a compact open subgroup. Then there is a conjugation invariant neighbourhood $U$ of $e$ with $U\subseteq K_1$, and there is in turn a compact open subgroup $K_2 \subseteq U$. Since $U$ is conjugation invariant, we have the containment $\bigcup_{g \in G} gK_2 g^{-1} \subseteq U \subseteq K_1$, hence the open normal subgroup $K$ generated by the conjugates $\bigcup_{g \in G} gK_2 g^{-1}$ is also contained in $K_1$. As an open, hence closed, subgroup of a compact group, this implies $K \subseteq K_1$ is also compact.
\end{proof}

\thref{openamenable} together with \thref{normalSubgroupSIN} immediately give us the equivalence of exactness and C*-exactness for tdlc SIN groups.

\begin{cor}\thlabel{SINequivalence}
Let $G$ be a tdlc SIN group. If $G$ is C*-exact then it is exact.
\end{cor}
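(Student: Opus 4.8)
The plan is simply to compose the two ingredients that have just been put in place, so the proof will be essentially immediate. First I would invoke \thref{normalSubgroupSIN}: because $G$ is a tdlc SIN group, it admits a neighbourhood base at the identity consisting of compact open normal subgroups, so in particular there exists at least one compact open normal subgroup $K \normal G$.

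Next I would note that $K$, being compact, is amenable; hence $K$ is an \emph{open normal amenable} subgroup of $G$. This places $G$ squarely under the hypotheses of \thref{openamenable}, which states that a locally compact group possessing an open normal amenable subgroup is exact provided it is C*-exact. Since $G$ is assumed C*-exact by hypothesis, \thref{openamenable} yields that $G$ is exact, which is exactly the claim.

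I do not expect any real obstacle here: all of the content has been front-loaded into the two cited results — \thref{normalSubgroupSIN}, whose proof uses the SIN property to trap a conjugation-invariant neighbourhood inside a prescribed compact open subgroup and then pass to the (compact) normal subgroup it generates, and \thref{openamenable}, which bootstraps \thref{discrete} by observing that $G/K$ is discrete and C*-exact (via Corollary \ref{corollary:amenable subgroup}), hence exact, and then uses that extensions of amenable groups by exact groups are exact. The only thing that needs checking is that the conclusion of the former matches the hypothesis of the latter verbatim, and it does.
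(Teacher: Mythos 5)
Your proof is correct and is exactly the argument the paper intends: it states that \thref{openamenable} together with \thref{normalSubgroupSIN} immediately give the result, which is precisely your composition of the two (a compact open normal subgroup exists by the lemma, it is amenable because compact, and the proposition applies). No gaps.
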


The main result of this section now follows quickly using structure theory of IN groups.

\begin{thm}\thlabel{IN groups}
Let $G$ be a tdlc IN group. If $G$ is C*-exact then it is exact.
\end{thm}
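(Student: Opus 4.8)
The plan is to reduce the IN case to the SIN case using the structure theorem \thref{INquotient}, exactly as \thref{maintheorem} reduced open-amenable-radical arguments to \thref{trace}. By \thref{INquotient} there is a compact normal subgroup $K \normal G$ with $G/K$ a tdlc SIN group. First I would observe that $G/K$ is still totally disconnected: the quotient map $q : G \to G/K$ is proper (the kernel is compact), and a proper continuous surjection of a tdlc group onto a Hausdorff locally compact group has totally disconnected image, so $G/K$ is indeed a tdlc SIN group and \thref{SINequivalence} applies to it.

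Next I would push C*-exactness of $G$ down to $G/K$. Since $K$ is compact, hence amenable, Corollary \ref{corollary:amenable subgroup} gives that the quasi-regular representation $G \acts L^2(G/K)$ is weakly contained in $\lambda_G$, so there is a surjective $\ast$-homomorphism $\C(G) \onto \C(G/K)$; as exactness of C*-algebras passes to quotients \cite[Corollary 9.3]{wassermannexact}, $G/K$ is C*-exact. By \thref{SINequivalence}, $G/K$ is then exact. Since $K$ is compact it is exact (being amenable), and exactness of groups is preserved under extensions \cite[Theorem 5.1]{kw2}, so $G$, being an extension of $K$ by $G/K$, is exact. That completes the argument.

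The only genuinely nontrivial point — and the one I would be most careful about — is the preservation of total disconnectedness under the quotient $G \to G/K$. This is standard (van Dantzig-type reasoning: preimages of compact open subgroups of $G/K$ are compact open in $G$, and conversely a compact open subgroup $L \le G$ has $LK/K$ compact open in $G/K$ since $K$ is compact and $L$ open), but it deserves an explicit sentence. Everything else is a direct concatenation of permanence properties already recorded in the excerpt; there is no delicate ideal-theoretic step here as in \thref{maintheorem}, because \thref{INquotient} hands us a genuine extension rather than merely a covering by open subgroups.
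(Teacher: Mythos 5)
Your proposal is correct and follows essentially the same route as the paper: apply \thref{INquotient} to get a compact normal $K$ with $G/K$ SIN, push C*-exactness to the quotient via the surjection $\C(G) \onto \C(G/K)$ coming from Corollary \ref{corollary:amenable subgroup}, invoke \thref{SINequivalence}, and close with the extension permanence of exactness. The only difference is that you explicitly verify $G/K$ is still totally disconnected (a point the paper leaves implicit but which is indeed needed to apply \thref{SINequivalence}), and your van Dantzig-style justification of that step is correct.
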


\begin{proof}
By \thref{INquotient}, $G$ has a compact normal subgroup $H$ so that $G/H$ is a SIN group with the quotient topology. Since $H$ is compact it is also amenable, so we get a surjection $\C(G) \to \C(G/H)$ as in the proof of \thref{openamenable}.

Since exactness passes to quotients \cite[Theorem 5.1]{kw2}, $G/H$ is also C*-exact. By \thref{SINequivalence} this implies $G/H$ is exact, hence $G$ is exact as an extension of a compact group $H$ by an exact group $G/H$.
\end{proof}

By a result of Losert \cite[Corollary 1.2]{center} the center of $\C(G)$ is non-trivial if and only if $G$ is IN. Therefore we obtain the following result with purely C*-algebraic hypotheses.

\begin{cor}
Let $G$ be tdlc. If $\C(G)$ is exact and has non-zero center, then $G$ is exact.
\end{cor}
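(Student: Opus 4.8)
The plan is to chain together Losert's characterization of IN groups with \thref{IN groups}, so the argument is essentially a two-line deduction. First I would observe that the hypothesis ``$\C(G)$ has non-zero center'' is, by \cite[Corollary 1.2]{center}, exactly the statement that $G$ is an IN group. Since $G$ is assumed to be tdlc, this places us in the setting of \thref{IN groups}.

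Next I would note that the remaining hypothesis, ``$\C(G)$ is exact'', is by definition the assertion that $G$ is C*-exact. Applying \thref{IN groups} to the tdlc IN group $G$ then yields that $G$ is exact, which is the desired conclusion.

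There is no real obstacle here: the corollary is a repackaging of \thref{IN groups} with the topological hypothesis ``$G$ is IN'' replaced by the equivalent purely C*-algebraic hypothesis ``$Z(\C(G)) \neq 0$'' via Losert's theorem. The only point worth a sentence in the writeup is making explicit that ``$\C(G)$ exact'' and ``$G$ C*-exact'' are, by our conventions in Section 2, the same condition, so that the hypotheses of \thref{IN groups} are met verbatim.

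\begin{proof}
Suppose $G$ is tdlc, $\C(G)$ is exact, and $Z(\C(G)) \neq 0$. By \cite[Corollary 1.2]{center}, the latter condition implies that $G$ is an IN group. Moreover, since $\C(G)$ is exact, $G$ is C*-exact by definition. Hence $G$ is a tdlc IN group which is C*-exact, and \thref{IN groups} shows that $G$ is exact.
\end{proof}
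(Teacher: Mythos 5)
Your proof is correct and is exactly the paper's argument: the paper derives this corollary by citing Losert's result that $Z(\C(G)) \neq 0$ is equivalent to $G$ being IN, and then invoking \thref{IN groups}. Nothing further is needed.
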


Although IN groups comprise a rich class of unimodular groups, there are natural examples of tdlc unimodular groups which aren't IN.

\begin{exam}
Let $G := \text{SL}_2(\bb{Q}_p)$, where $p$ is any prime and $\bb{Q}_p$ denotes the set of $p$-adic rationals. Then $G$ is generated by its commutators, hence it is necessarily unimodular. However, $G$ is not IN: it is routine to show that all points other than $I_2$ and $-I_2$ can be conjugated arbitrarily far from $I_2$. Although this example is not IN, it is a linear group hence exact \cite{novikov}.
\end{exam}

The following result is of independent interest for the structure theory of totally disconnected IN groups.

\begin{prop}\thlabel{SINaslimit}
A topological group $G$ is tdlc and SIN if and only if there is an inverse system $(\Gamma_i, \phi_{ij})_{i>j \in I}$ of discrete groups so that $| \ker \phi_{ij} | < \infty$ for all $i > j$, and $G \cong \lim_\leftarrow \Gamma_i$.
\end{prop}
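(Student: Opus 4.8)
The plan is to prove the two implications separately, using Lemma~\thref{normalSubgroupSIN} as the main structural input for the forward direction and an observation identifying certain kernels as profinite groups for the reverse direction. Throughout, $I$ is taken to be a directed poset, as is implicit in the term \emph{inverse system}.

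\textbf{($\Rightarrow$)} Suppose $G$ is tdlc and SIN. By Lemma~\thref{normalSubgroupSIN}, the family $\mathcal{F}$ of \emph{all} compact open normal subgroups of $G$ is a neighbourhood base at the identity; since an intersection of two members of $\mathcal{F}$ is again compact, open, and normal, $(\mathcal{F},\supseteq)$ is directed. For $K \in \mathcal{F}$ set $\Gamma_K := G/K$, which is discrete because $K$ is open, and for $K \subseteq K'$ let $\phi_{K,K'}\colon G/K \onto G/K'$ be the canonical surjection; its kernel $K'/K$ is a quotient of the compact group $K'$, hence compact, and a subgroup of the discrete group $G/K$, hence discrete, so it is finite. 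The canonical homomorphism $\psi\colon G \to \lim_\leftarrow \Gamma_K$, $g \mapsto (gK)_{K}$, is then the desired isomorphism. It is a continuous homomorphism, and it is injective since $\bigcap_{K \in \mathcal{F}} K = \{e\}$. For surjectivity, given a compatible family $(g_K K)_K$, compatibility shows that the cosets $g_K K$ form a downward-directed family of nonempty compact subsets of $G$, so by the finite intersection property some $g$ lies in all of them, and then $\psi(g) = (g_K K)_K$. Finally, for a fixed $K_0 \in \mathcal{F}$ the restriction $\psi|_{K_0}$ is a continuous bijection from the compact group $K_0$ onto the set $W := \{(h_K)_K \in \lim_\leftarrow \Gamma_K : h_{K_0} = eK_0\}$ (surjectivity onto $W$ is where the previous step is used), and $W$, being the preimage of a point of the discrete group $\Gamma_{K_0}$, is open in the Hausdorff group $\lim_\leftarrow \Gamma_K$; a continuous bijection from a compact space to a Hausdorff space is a homeomorphism, so $\psi$ carries the neighbourhood $K_0$ of $e$ homeomorphically onto the open set $W$, whence $\psi$ is open and therefore a topological isomorphism.

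\textbf{($\Leftarrow$)} Conversely, let $G = \lim_\leftarrow \Gamma_i$ with each $\Gamma_i$ discrete and $|\ker\phi_{ij}| < \infty$ for all $i > j$. Write $\pi_i\colon G \to \Gamma_i$ for the coordinate maps and $N_i := \ker\pi_i$. Each $N_i$ is the $\pi_i$-preimage of a point of the discrete group $\Gamma_i$, hence an open normal subgroup; moreover $\bigcap_i N_i = \{e\}$ and, using directedness of $I$, the $N_i$ form a neighbourhood base at the identity. Thus $G$ is Hausdorff, totally disconnected, and SIN, and it remains only to verify local compactness, for which it suffices to show that a single $N_{i_0}$ is compact. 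Since $\{i : i \geq i_0\}$ is cofinal in $I$, we may identify $N_{i_0} \cong \lim_{\leftarrow,\, i \geq i_0}(\pi_i(G)\cap\ker\phi_{i i_0})$; each term of this system is a subgroup of the finite group $\ker\phi_{i i_0}$, hence finite, so $N_{i_0}$ is an inverse limit of finite groups, i.e.\ a profinite group, and in particular compact. Therefore $G$ is tdlc and SIN, in fact with a neighbourhood base of compact open normal subgroups, in accordance with Lemma~\thref{normalSubgroupSIN}.

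I expect the substantive point — and the one most in need of care — to be the identification of $N_{i_0}$ as a profinite group in the reverse direction: this is precisely where the finiteness of the transition kernels is used, and it is what promotes a totally disconnected SIN group to a locally compact one. The remaining verifications (that $\psi$ is a continuous injective homomorphism, that the cosets $g_K K$ are downward-directed, that the $N_i$ form a neighbourhood base, and the cofinality bookkeeping needed to identify $N_{i_0}$) are routine. In the forward direction the analogous essential ingredient is Lemma~\thref{normalSubgroupSIN} itself, since it is the \emph{normality} of the members of the neighbourhood base that makes each $G/K$ a group and allows the inverse limit to be formed at all.
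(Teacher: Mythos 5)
Your proof is correct and follows essentially the same route as the paper: quotienting by the neighbourhood base of compact open normal subgroups from Lemma \thref{normalSubgroupSIN} in one direction, and in the other exhibiting a neighbourhood base of compact open normal subgroups inside the inverse limit (your $N_{i_0}=\ker\pi_{i_0}$ coincides with the paper's $K_{j,i_0}$). The only difference is that you carry out in detail the verifications the paper labels routine, in particular the compactness of $N_{i_0}$ via the finiteness of the transition kernels.
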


\begin{proof}
If $G$ is tdlc and SIN, there is a neighbourhood base of compact open normal subgroups by \thref{normalSubgroupSIN}. Ordering this normal neighbourhood base $(K_i)_{i \in I}$ by reverse inclusion gives us an inverse family $(G/K_i)_{i\in I}$ of discrete groups, with connecting maps $\phi_{ij} : G/K_i \to G/K_j$ whenever $i > j$. Since each $K_i$ is both compact and open, the $\phi_{ij}$ all have finite kernel, i.e., $| \ker \phi_{ij} | < \infty$. It is routine to check that $G$ is isomorphic to the inverse limit $\lim_\leftarrow G/K_i \subseteq \prod_i G/K_i$ with the relative product topology.

Suppose conversely that $G$ is isomorphic to an inverse limit $\lim_\leftarrow \Gamma_i$ with respect to the relative product topology, as described in the proposition statement. Viewing $G$ as a subgroup of $\prod_i \Gamma_i$, the subsets 
\[K_{j,k} := \{ (g_i)_{i\in I} \in \lim_\leftarrow \Gamma_i : g_j \in \ker \phi_{j,k} \}\]
with $j > k$ form a neighbourhood base of compact open normal subgroups.
\end{proof}

\begin{remark}
It is well known that compact tdlc groups are precisely the profinite groups \cite[Theorem 2.1.3]{profinite}, hence \thref{INasextension,SINaslimit} taken together tell us that tdlc IN groups are precisely extensions of profinite groups by inverse limits of discrete groups whose connecting maps have finite kernel.
\end{remark}

In the following example, we construct a tdlc IN group which is not SIN and non-exact.

\begin{exam}
Fix a finite group $F$ and a non-exact discrete group $H$ (interesting examples of non-exact groups may be found in \cite{mimura,osajda}). Then define $G = \prod_H F \rtimes H$, where $H$ acts by left translation on the compact group $\prod_H F$. It is clear that $G$ is an extension of the profinite group $\prod_H F$ by the discrete (hence tdlc SIN) group $H$, hence $G$ is a tdlc IN group.

However, it is not SIN: take any basic neighbourhood $U$ of the identity $e$ in $\prod_H F$, then some element of $U$ may be left-translated outside of $U$. This means that $U$ is not invariant under conjugation by elements in $H$.

Moreover, $G$ is non-exact as $H$ is a non-exact closed subgroup \cite[Theorem 4.1]{kw2}. This gives a tdlc IN group which is not SIN and non-exact.
\end{exam}

\subsection{Non-inner amenable groups}

In this subsection we outline conditions that allow us to construct a non-inner amenable group $G$ with an open amenable normal subgroup. The result of this section was inspired by \citep[Remark 2.6 (ii)]{forrest2017existence}. By \thref{openamenable} these groups are exact if and only if they are C*-exact, but since they are not inner amenable we cannot conclude this equivalence from Anantharaman-Delaroche's \citep[Theorem 7.3]{delaroche1}. This shows that our results strictly expand the class of groups for which the equivalence problem is solved.

\begin{prop}\thlabel{semidirect}
Let $N$ be an amenable locally compact group and $H$ a discrete group. If the only conjugation invariant mean on $H$ is evaluation at the identity, and if $\alpha : H \to \textrm{\emph{Aut}}(N)$ is an action such that there is no $\alpha(H)$-invariant mean on $N$, then the semi-direct product $N \rtimes H$ is not inner amenable.
\end{prop}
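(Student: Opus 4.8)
The plan is to argue by contradiction: suppose $m$ is a conjugation-invariant mean on $L^\infty(G)$, where $G = N \rtimes H$, and produce an $\alpha(H)$-invariant mean on $N$. The first move is to push $m$ forward along the quotient homomorphism $q \colon G \to H$, $q(n,h) = h$. For $f \in \ell^\infty(H)$ the composite $f \circ q$ is bounded and locally constant, hence lies in $L^\infty(G)$, so $\tilde m(f) := m(f \circ q)$ defines a mean on $\ell^\infty(H)$. If $g_0 \in G$ satisfies $q(g_0) = h_0$, then $(h_0 \cdot f) \circ q = g_0 \cdot (f \circ q)$ for the respective conjugation actions, so invariance of $m$ makes $\tilde m$ a conjugation-invariant mean on $\ell^\infty(H)$. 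By the first hypothesis $\tilde m$ must be evaluation at $e \in H$. Applying this to the indicator $\mathbf 1_{\{e\}} \in \ell^\infty(H)$ — a legitimate element precisely because $H$ is discrete — and observing $\mathbf 1_{\{e\}} \circ q = \mathbf 1_N$, we conclude $m(\mathbf 1_N) = 1$; that is, $m$ is ``concentrated on the clopen subgroup $N$''.

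Next I would restrict $m$ to $N$. Since $H$ is discrete, $N = N \times \{e\}$ is a clopen normal subgroup of $G$, so $\mathbf 1_N$ is a central projection in $L^\infty(G)$ and $L^\infty(G) \cong L^\infty(N) \oplus L^\infty(G \setminus N)$. As $m(\mathbf 1_N) = 1$, the Cauchy--Schwarz inequality for states forces $m(F) = m(\mathbf 1_N F)$ for every $F$, so $m$ is determined by the mean $\mu := m|_{L^\infty(N)}$ on $L^\infty(N)$. Now test conjugation-invariance of $m$ against the elements $g_h := (e_N, h) \in G$: a short direct computation shows that conjugation by $g_h$ preserves $N$ and acts on $L^\infty(N)$ as $F \mapsto F \circ \alpha(h^{-1})$. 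Because $g_h$ fixes $N$ setwise, $g_h \cdot F$ again lies in $L^\infty(N)$ whenever $F$ does, so invariance of $m$ yields $\mu(F \circ \alpha(h)) = \mu(F)$ for all $h \in H$ and all $F \in L^\infty(N)$. Thus $\mu$ is an $\alpha(H)$-invariant mean on $N$, contradicting the second hypothesis; hence no such $m$ exists and $G$ is not inner amenable.

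The step requiring the most care is the passage from ``$\tilde m$ is evaluation at $e$'' to ``$m(\mathbf 1_N) = 1$'': this is exactly where discreteness of $H$ enters, ensuring that $\{e\} \subseteq H$ and $N \subseteq G$ are clopen so that their indicators belong to the relevant $L^\infty$-algebras, and the remaining bookkeeping between $N$, $N \times \{e\}$, and the conjugation action of $G$ on $L^\infty(G)$ must be carried out with consistent left/right conventions. Note that the amenability of $N$ plays no role in this argument; it is needed only to apply \thref{openamenable} to the groups produced here.
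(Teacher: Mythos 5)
Your argument is correct and follows essentially the same route as the paper's (much terser) proof: push the conjugation-invariant mean forward along $G \to H$ to see that it concentrates on $N$, then restrict to obtain an $\alpha(H)$-invariant mean on $N$. You simply supply the details the paper leaves implicit --- the role of discreteness of $H$ in making $\mathbf{1}_N$ available, the Cauchy--Schwarz localization, and the identification of conjugation by $(e,h)$ with $\alpha(h)$ on $L^\infty(N)$ --- and your remark that amenability of $N$ is not used in this step is likewise consistent with the paper.
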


\begin{proof}
Suppose $\phi$ is a conjugation invariant mean on $N \rtimes H$. Since the only conjugation invariant mean on $H$ is evaluation at the identity, then $\phi$ must concentrate on $N$. Thus we may view $\phi$ as a mean on $N$ which is invariant under the action of $H$, a contradiction.
\end{proof}

Note that in the product topology on $N \times H$, the set $N \times \{e\} \cong N$ is open, hence it is an open amenable normal subgroup of $N \rtimes H$. Using \thref{openamenable}, it follows that the groups constructed in the above result are exact if and only if they are C*-exact.

\begin{exam}
It was proved in \cite[Remark 2.6 (ii)]{forrest2017existence} that $\mathbb{R}^2$ and $F_6 \subseteq SL_2(\mathbb{R})$ satisfy the hypotheses of \thref{semidirect}, and hence that $\mathbb{R}^2 \rtimes F_6$ is not inner amenable.

To produce a non-exact example, simply let $H$ be any non-exact discrete group and let $H$ act on $\mathbb{R}^2$ trivially. Then the semi-direct product $\mathbb{R}^2 \rtimes (F_6 \ast H)$ is non-exact, and it is also not inner amenable since by \cite[Theorem 1.1]{duchesne} the only conjugation invariant mean on $F_6 \ast H$ is evaluation at the identity.
\end{exam}

\subsection{Automorphism groups of trees}

For $d \geq 3$, we denote by $T_d$ the infinite $d$-regular tree, e.g., $T_4$ is the Cayley graph of the free group on two generators $\bb{F}_2$. The automorphism group $\text{Aut}(T_d)$ becomes a (non-discrete) tdlc group when equipped with the topology of pointwise convergence on the set of vertices $V(T_d)$. For every finite subset $S \subseteq V(T_d)$, the fixator $\text{Fix}_{\text{Aut}(T_d)}(S)$ of $S$ is a compact open subgroup. 

Fixing a vertex $b \in V(T_d)$ and an integer $n \geq 1$, we denote by $B_n(b)$ the ball centred at $b$ of radius $n$ in the path metric. The set of fixators $K_n := \text{Fix}_{\text{Aut}(T_d)}(B_n(b))$ forms a sequential neighbourhood base at the identity.

This subsection focuses on a rich class of subgroups of $\text{Aut}(T_d)$ with prescribed local actions first considered by Burger and Mozes \cite{burgermozes}. For a very good introduction to these groups and for additional results, we refer the reader to \cite[Section 4]{garrido}. We shall hereafter use the same notation and terminology as this source. In particular, for a subgroup $F$ of the symmetric group $S_d$ we will denote by $U(F)\leq \text{Aut}(T_d)$ the Burger-Mozes group of $F$.

More precisely, \thref{stronglyNonFree} shows that certain Burger-Mozes groups are too geometrically dense for our results of section \ref{main section} to apply. In Example \ref{example:alternating} we provide a concrete example of such a group. This shows that the equivalence problem for exactness remains unsolved for these groups.

\begin{defn}
We say a subgroup $G \leq \text{Aut}(T_d)$ is \textit{geometrically dense} if it does not fix any proper subtree, and does not fix any end in $\partial T_d$.
\end{defn}

\begin{prop}\thlabel{trivialAmenableRadical}
For any subgroup $F\leq S_d$, the amenable radical of $U(F)$ is trivial.
\end{prop}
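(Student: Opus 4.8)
The plan is to show the amenable radical $R$ of $G:=U(F)$ --- a closed normal amenable subgroup --- is trivial, by playing the normality of $R$ against the geometric density of $G$: a nontrivial normal amenable subgroup must stabilise some canonical feature of $T_d$, and then so must $G$, which is impossible. The first step is to observe that $U(F)$ is geometrically dense for \emph{every} $F\le S_d$, not just for transitive $F$. Since $\{\id\}\le F$ we have $U(\{\id\})\le U(F)$, and, fixing a legal colouring of $T_d$, for each edge $e=\{u,v\}$ one constructs a colour-preserving automorphism $\tau_e\in U(\{\id\})$ interchanging $u$ and $v$ --- map the rooted coloured half-tree hanging off $u$ onto the one hanging off $v$, and conversely. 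Such a $\tau_e$ is elliptic, and since it exchanges the two halves of $T_d$ on either side of $e$ it fixes no end; composing the $\tau_e$ along paths shows that $G$ acts transitively on $V(T_d)$. Hence $G$ fixes no vertex, no edge, no proper subtree, and no end of $T_d$.

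The second step uses the standard classification (due to Tits) of isometric group actions on trees. Since $R$ is amenable, its action on $T_d$ is not of general type: two hyperbolic elements whose axes share no endpoint would, by ping-pong on high powers, generate a free group of rank two acting freely on $T_d$, hence discrete in $\text{Aut}(T_d)$ and closed in $R$, contradicting the amenability of $R$. So exactly one of three cases occurs: $R$ fixes a point of the geometric realisation $|T_d|$; $R$ fixes an end of $T_d$; or $R$ stabilises a bi-infinite geodesic line. In the first case, $\operatorname{Fix}(R)\subseteq|T_d|$ is nonempty, closed, convex, and $G$-invariant because $g\cdot\operatorname{Fix}(R)=\operatorname{Fix}(gRg^{-1})=\operatorname{Fix}(R)$; by vertex-transitivity, a nonempty $G$-invariant convex subset of $|T_d|$ either contains a vertex --- and is then all of $|T_d|$ --- or lies inside a single edge, forcing $G$ to fix that edge. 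The latter is impossible, so $\operatorname{Fix}(R)=|T_d|$ and $R=\{1\}$. In the second case, if $R$ fixes a unique end $\xi$ then normality gives $g\xi=\xi$ for all $g\in G$, contradicting that $G$ contains the end-free flip $\tau_e$; and if $R$ fixes more than one end it stabilises the line joining two of them, which is the third case.

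The third case --- $R$ stabilises a line $L$ --- is where I expect the only genuine (and still mild) difficulty: one should not be distracted by whether $R$ fixes or interchanges the two ends of $L$, because what matters is only that $L$ itself is canonical. If $R$ fixes a point of $L$ we are back in the first case, so assume it does not; then $R$ acts on $L\cong\mathbb{Z}$ with no global fixed point, so some $h\in R$ translates $L$ by a nonzero amount, and the elementary identity $d(x,hx)=2\,d(x,L)+\ell(h)$ for $x\notin L$ shows that $h$ is hyperbolic with axis exactly $L$. Thus $L$ is canonically attached to $R$ as the common axis of its hyperbolic elements, so its pair of ends $\partial L$ is $G$-invariant, and $G$ stabilises the proper subtree $L$ (the convex hull of $\partial L$) --- contradicting vertex-transitivity once more. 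Since in each case we obtain either a contradiction or $R=\{1\}$ directly, this proves that the amenable radical of $U(F)$ is trivial.
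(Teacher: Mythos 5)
Your proof is correct, but it takes a more self-contained route than the paper. The paper's argument leans on two external facts: it cites \cite[Lemma 2.10]{garrido} to conclude that any non-trivial normal subgroup $N$ of the geometrically dense group $U(F)$ is itself geometrically dense, and then \cite[Lemma 2.9]{garrido} together with ping-pong to plant a closed copy of $\mathbb{F}_2$ inside $N$, so that no non-trivial closed normal subgroup can be amenable. You instead verify geometric density of $U(F)$ by hand (the label-preserving edge inversions $\tau_e \in U(\{\mathrm{id}\})$ give vertex-transitivity and rule out fixed ends), and then run Tits' classification of tree actions on the amenable radical $R$ itself: amenability excludes general type by essentially the same Schottky/ping-pong mechanism (a free subgroup all of whose non-trivial elements are hyperbolic meets every vertex stabilizer trivially, hence is discrete and closed, contradicting amenability of $R$), and in the remaining cases (fixed point, fixed end, invariant line) you use normality of $R$ to promote the canonical invariant object --- $\operatorname{Fix}(R)$, the unique fixed end, or the common axis $L$ --- to a $G$-invariant one, contradicting geometric density of $G$. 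In effect you reprove, for the amenable radical, the special case of \cite[Lemma 2.10]{garrido} that the paper quotes. What your version buys is independence from the two cited lemmas and an explicit proof that $U(F)$ is geometrically dense (which the paper only asserts); what the paper's version buys is brevity and the stronger intermediate conclusion that \emph{every} non-trivial normal subgroup of $U(F)$ contains a closed free group, not merely that the amenable radical is trivial. The one step in your write-up that genuinely carries the lineal case is the observation that a hyperbolic element of $R$ preserving $L$ must have axis exactly $L$ (via $d(x,hx)=\ell(h)+2\,d(x,L)$), which is what makes $L$ canonical and hence $G$-invariant; as written, that point is handled correctly.
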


\begin{proof}
Let $N$ be a non-trivial normal subgroup of $U(F)$. Since $U(\{ e\})$ is geometrically dense so is $U(F)$, hence $N$ is geometrically dense by \cite[Lemma 2.10]{garrido} as it is non-trivial and normal in $U(F)$. Using \cite[Lemma 2.9]{garrido} and applying the ping pong lemma, one can produce a closed copy of $\mathbb{F}_2$ in $N$, proving it is non-amenable.
\end{proof}

In particular, the amenable radical is open if and only if $U(F)$ is discrete. This result implies that the only Burger-Mozes groups admitting a tracial state are the discrete ones, which is precisely when the action $F \acts \{1,\dots,d\}$ is free \cite[Proposition 4.6 (v)]{garrido}.

\begin{cor}\thlabel{noTrace}
Let $F \leq S_d$ be a subgroup which does not act freely on $\{ 1, \ldots, d \}$, then the reduced C*-algebra $\C(U(F))$ does not admit a tracial state.
\end{cor}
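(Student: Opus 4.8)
The plan is to deduce this directly from \thref{trivialAmenableRadical} together with the Kennedy--Raum trace criterion already invoked in the proof of \thref{trace}, arguing by contradiction. So I would suppose, for contradiction, that $\C(U(F))$ admits a tracial state, and aim to conclude that $F$ must act freely on $\{1,\ldots,d\}$.

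First I would apply the main theorem of \cite{kennedyraumtraces}: the existence of a tracial state on $\C(U(F))$ is equivalent to $U(F)$ possessing an open normal amenable subgroup $N$. Since open subgroups of a topological group are automatically closed, $N$ is a \emph{closed} normal amenable subgroup of $U(F)$, and hence is contained in the amenable radical of $U(F)$. But \thref{trivialAmenableRadical} tells us this amenable radical is trivial, so $N = \{ e \}$. As $N$ is open, this forces $U(F)$ to be discrete.

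Finally I would invoke the fact that a Burger--Mozes group $U(F)$ is discrete precisely when $F$ acts freely on $\{ 1, \ldots, d \}$, by \cite[Proposition 4.6 (v)]{garrido}; this contradicts the standing hypothesis that $F$ does not act freely, so $\C(U(F))$ admits no tracial state. The argument is a short assembly of known facts, so there is no serious obstacle; the one step worth double-checking is the inclusion of an open normal amenable subgroup into the (closed) amenable radical, which relies only on open subgroups being closed together with the existence of a largest closed normal amenable subgroup in a locally compact group.
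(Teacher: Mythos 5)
Your proposal is correct and uses exactly the same ingredients as the paper's proof (\thref{trivialAmenableRadical}, the Kennedy--Raum trace criterion, and the fact that $U(F)$ is discrete iff $F$ acts freely), merely rephrased as a contradiction rather than a direct argument. The step you flagged—that an open normal amenable subgroup sits inside the closed amenable radical—is sound and is implicitly used by the paper as well.
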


\begin{proof}
By \thref{trivialAmenableRadical}, the amenable radical of $U(F)$ is trivial, and since $F \curvearrowright \{1,\ldots, d\}$ is not free then $U(F)$ is non-discrete. Hence the amenable radical is not open, and by \citep{kennedyraumtraces} this implies there is no tracial state on $\C(U(F))$.
\end{proof}

This means that \thref{trace} cannot be applied to this class. We would then like to determine whether we can write $U(F)$ as a union $\bigcup L_n$ of open subgroups with open amenable radical so that we may apply \thref{maintheorem}. 

Notice that if one of the $L_n$ is geometrically dense, then by the proof of \thref{trivialAmenableRadical} it has trivial amenable radical, hence does not have a trace if it is non-discrete. So if we would like to show that the $L_n$ cannot all have open amenable radical, then it suffices to show that at least one is geometrically dense.

\begin{remark}
For any subgroup $F \leq S_d$, the group $U(F)$ is compactly generated. Hence, if we write $U(F) = \bigcup L_n$ as an \textit{increasing} union of open subgroups $L_n \leq L_{n+1}$ then the sequence eventually terminates at some $L_N$. Since $U(F)$ is itself geometrically dense, this says that we can never write $U(F)$ as an increasing union of open subgroups which are not geometrically dense.
\end{remark}

The condition on the action $F \acts \{1,\ldots,d\}$ described in the following proposition is a strong converse to freeness.

\begin{prop}\thlabel{stronglyNonFree}
Let $F \leq S_d$ be a subgroup such that for every $l \in \{1,\dots,d\}$, the action of the stabilizer subgroup $\text{\emph{St}}_F(l) \acts \{1,\dots,l-1,l+1,\dots,d\}$ is transitive. If $U(F) = \bigcup L_n$ for some sequence $(L_n)_{n \geq 0}$ of open subgroups, then there is $n$ so that $L_n$ is geometrically dense. Hence $U(F)$ cannot be expressed as a union of open subgroups each having open amenable radical.
\end{prop}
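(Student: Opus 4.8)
The approach is by contradiction: assume that no $L_n$ is geometrically dense, and show that then no element of $\bigcup_n L_n$ is a hyperbolic tree automorphism — contradicting the fact that $U(F)$ contains one.

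Fix a basepoint $b$. The pointwise fixators $V_k := \{g \in U(F): g|_{B_k(b)} = \mathrm{id}\}$ form a neighbourhood base at the identity, so each open $L_n$ contains some $V_{k_n}$. Since the hypothesis forces every $\mathrm{St}_F(l)$ to act nontrivially on the remaining $d-1\ge 2$ points, $F$ does not act freely and $U(F)$ is non-discrete, so each $V_k$ is an infinite compact open subgroup. Moreover, being geometrically dense, $U(F)$ contains a closed copy of $\mathbb{F}_2$ (as in the proof of \thref{trivialAmenableRadical}); every nontrivial element $g$ of it is hyperbolic, for otherwise $g$ would fix a vertex or an edge-midpoint, placing the infinite cyclic group $\langle g \rangle$ inside a compact subgroup, contradicting discreteness. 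In particular $U(F)$ has an element fixing no vertex of $T_d$.

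The hypothesis is used precisely as follows. For any vertex $v$ with $d(b,v)\ge k$, the stabiliser of $v$ in $V_k$ realises every element of $\mathrm{St}_F(l_v)$ as local action at $v$ (where $l_v$ is the colour of the edge of $v$ pointing towards $b$), and $\mathrm{St}_F(l_v)$ is transitive on $v$'s other $d-1$ edges; consequently $V_k$ acts \emph{spherically transitively} on the subtree hanging below each vertex $w$ at distance $k$ from $b$. Two consequences: (i) $V_k$ fixes no end, since for an end with ray $(b=u_0,u_1,\dots)$ one finds $g\in V_k$ fixing $u_0,\dots,u_k$ but sending $u_{k+1}$ to a different neighbour of $u_k$, whence $g$ moves that end; (ii) a proper $V_k$-invariant subtree $S$ is spherically symmetric beyond $B_k(b)$, hence consists of a finite subtree of $B_k(b)$ together with, at a finite set $P$ of vertices at distance $k$ from $b$, complete infinite subtrees — so any automorphism stabilising $S$ permutes the finite set $P$ (or, if $S$ is finite, stabilises $S$) and therefore fixes a vertex or an edge-midpoint, namely the centre of $\mathrm{conv}(P)$ (resp.\ of $S$).

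Now suppose no $L_n$ is geometrically dense. Each $L_n$ then fixes an end or a proper subtree; since $V_{k_n}\le L_n$ fixes no end by (i), $L_n$ fixes a proper subtree, which is $V_{k_n}$-invariant, so by (ii) $L_n$ fixes a vertex or an edge-midpoint. Thus $L_n$ lies in a compact vertex- or edge-stabiliser and, being open (hence closed), is itself compact; in particular every element of $L_n$ fixes a vertex. This contradicts the existence of a vertex-free element of $U(F)=\bigcup_n L_n$. Hence some $L_n$ is geometrically dense. For the last assertion: such an $L_n$ is open, hence non-discrete, and a geometrically dense group has trivial amenable radical (any nontrivial normal subgroup is again geometrically dense by \cite[Lemma 2.10]{garrido}, hence non-amenable), which cannot be open in a non-discrete group — so the $L_n$ cannot all have open amenable radical. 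I expect the main obstacle to be step (ii): excluding the possibility that an $L_n$ fails to be geometrically dense while still containing a hyperbolic element. The transitivity of the point stabilisers $\mathrm{St}_F(l)$ is exactly what forces $V_k$ to be "rigid-free" — spherically transitive — beyond $B_k(b)$, which is what pins a $V_k$-invariant subtree, and hence its $U(F)$-stabiliser, down to a vertex or an edge-midpoint.
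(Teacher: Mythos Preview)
Your argument is correct and takes a genuinely different route from the paper. The paper argues constructively: it picks an $L_N$ containing a hyperbolic element $h$ and some fixator $\mathrm{Fix}_{U(F)}(B_n(b))$, proves a separate minimality lemma (for any half-tree $Y$, the fixator of its complement acts minimally on $\partial Y$), and then combines the hyperbolic dynamics of $h$ with this minimality to show that $L_N$ acts transitively on $\partial T_d$ and has hyperbolic elements with every possible attracting end --- hence is geometrically dense. You instead argue structurally and by contradiction: an open subgroup containing $V_k$ that is not geometrically dense must stabilise a proper $V_k$-invariant subtree, and the spherical transitivity of $V_k$ forces any such subtree to have only a finite ``defect'', pinning its stabiliser to a bounded set and hence excluding hyperbolic elements. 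Both approaches exploit the transitivity hypothesis on $\mathrm{St}_F(l)$ in the same way (to get strong transitivity of fixators beyond $B_k(b)$), but yours trades boundary dynamics for a classification of invariant subtrees; the paper's is more direct, yours more self-contained.

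One small imprecision in your step (ii): a proper $V_k$-invariant subtree $S$ can also carry \emph{finite} balls $B_{R_w}(w)$ (inside the branch below $w$) at some vertices $w$ with $d(b,w)=k$, not only complete infinite branches; and your set $P$ is not obviously intrinsic to $S$, so it is not immediate that an automorphism stabilising $S$ permutes it. The clean fix is to replace $P$ by the set $D(S)=\{v\in S:\deg_S(v)<d\}$, which is intrinsic, nonempty for proper $S$, permuted by any automorphism stabilising $S$, and finite (it lies in $B_k(b)$ together with finitely many leaf-spheres from the finite bushes). The conclusion that the stabiliser fixes the centre of $\mathrm{conv}(D(S))$ then goes through unchanged. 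Similarly, if $L_n$ fixes only an edge-midpoint, an inversion in $L_n$ need not fix a vertex --- but it is still not hyperbolic, which is all you need for the final contradiction.
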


We will need the following dynamical lemma.

\begin{lemma}\thlabel{minimal}
Let $F$ be as in \thref{stronglyNonFree}, and fix a half-tree $Y \subseteq T_d$. Then the fixator $\text{\emph{Fix}}_{U(F)}(T_d \setminus Y)$ acts minimally on $\partial Y$.
\end{lemma}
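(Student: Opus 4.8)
The plan is to exploit the strong transitivity hypothesis on $F$ together with the fact that elements of $U(F)$ fixing a half-tree can be prescribed arbitrarily on any edge of $Y$ (this is the content of \cite[Lemma 2.9]{garrido}-type local-action freedom). Recall that $\partial Y$ consists of the ends of $T_d$ passing through the half-tree $Y$, and that two ends are close in the boundary topology when the geodesics representing them agree on a long initial segment inside $Y$. So minimality of $\text{Fix}_{U(F)}(T_d \setminus Y)$ on $\partial Y$ amounts to the following: given any two ends $\xi, \eta \in \partial Y$ and any finite truncation, there is an automorphism in the fixator sending $\xi$ to an end agreeing with $\eta$ up to that truncation.

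First I would set up notation: let $o$ be the vertex of $Y$ adjacent to $T_d \setminus Y$, and identify a boundary point of $Y$ with a geodesic ray emanating into $Y$. Fix a closed orbit and suppose for contradiction it is not all of $\partial Y$; then pick $\xi$ in the orbit closure and $\eta$ outside it, together with a large $N$ so that the $N$-ball condition separates the orbit of $\xi$ from $\eta$. The key step is then a propagation argument along the geodesic from $o$ toward $\xi$: at each vertex $v$ along this geodesic, the hypothesis that $\text{St}_F(l)$ acts transitively on the remaining $d-1$ directions lets me find a local permutation in $F$ fixing the edge I came in on but moving the outgoing edge to any chosen neighbour. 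By the defining property of Burger-Mozes groups $U(F)$, I can realize a global automorphism whose local action at each $v$ is this prescribed permutation and which is the identity outside a large enough ball of $Y$ — in particular it lies in $\text{Fix}_{U(F)}(T_d \setminus Y)$. Composing such moves along consecutive vertices steers the ray $\xi$ onto any prescribed ray into $Y$, up to the desired truncation level $N$.

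The main obstacle I anticipate is bookkeeping the support of the constructed automorphism: I must ensure that the sequence of local adjustments needed to redirect $\xi$ toward $\eta$ can be performed while keeping the automorphism trivial on $T_d \setminus Y$, which requires that all the vertices where I change the local action lie strictly inside $Y$ — this is automatic since the whole geodesic from $o$ into $\xi$ lies in $Y$, but one must check the local permutations at $o$ itself don't disturb the edge leading out of $Y$, and this is exactly where $\text{St}_F(l)$-transitivity (rather than full transitivity of $F$) is the right hypothesis: it fixes the incoming label and permutes the rest. A secondary point is to confirm that the resulting element is genuinely in $U(F)$, i.e., that prescribing finitely many local actions compatibly extends to a global $U(F)$-element; this follows from the standard extension property of Burger-Mozes groups as recorded in \cite{garrido}. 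Once steering is established, minimality follows since the orbit of $\xi$ comes arbitrarily close to $\eta$, contradicting that $\eta$ lies outside the (closed) orbit closure.
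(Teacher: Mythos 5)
Your proposal is correct and follows essentially the same route as the paper: both arguments steer one end onto another one edge at a time, using transitivity of the point stabilizers $\text{St}_F(l)$ to produce elements of $\text{Fix}_{U(F)}(T_d \setminus Y)$ that preserve the initial segment already matched and redirect the next edge (the paper merely packages this via a legal labelling and the coding map $\phi$ of ends by label sequences). One minor slip worth noting: the redirecting automorphisms are trivial on $T_d \setminus Y$ but cannot be taken to be the identity outside a large ball, since they necessarily permute infinite subtrees; this does not affect the argument, as membership in the fixator only requires triviality on $T_d \setminus Y$.
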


\begin{proof}
Let $b$ denote the root of $Y$, i.e., the unique vertex with degree $d-1$ in $Y$. Fix a legal labelling \cite[Section 4]{garrido} of $T_d$ so that, without loss of generality, the deleted edge at $b$ has the label $d$. We define a map $\phi : \partial Y \to \{ (j_n)_{n \geq 1} \in \{1 ,\dots, d\}^{\bb{N}} \ | \ j_1 \neq d, \ j_n \neq j_{n+1} \text{ for all } n \geq 1 \}$ by sending $x$ to the sequence $\phi(x) = (j_n)_{n \geq 1}$, where $j_n$ denotes the label of the $n^{\text{th}}$ edge along the geodesic ray $[b,x)$ joining $b$ to $x$.

The map $\phi$ is a bijection, and it is a homeomorphism when the codomain is equipped with the topology of point-wise convergence.

Now, given any two ends $x, y \in \partial Y$ with $\phi(x) = (j_n)_{n \geq 1}$ and $\phi(y) = (i_n)_{n \geq 1}$, we show how to produce a sequence $(g_n)_{n \geq 1}$ in $\text{Fix}_{U(F)}(T_d \setminus Y)$ such that $g_n \cdot y \to x$.

Since $\text{St}_F(d) \acts \{1,\dots,d-1\}$ is transitive, there is $h_1 \in \text{Fix}_{U(F)}(T_d \setminus Y)$ so that the first entry of $\phi(h_1 \cdot y)$ is $j_1$. Similarly, if $e_n$ is the $n^{\text{th}}$ edge along $[b,x)$, and if $\phi(x)$ and $\phi((h_n \cdots h_1) \cdot y)$ agree on the first $n$ entries, then by transitivity of $\text{St}_F(j_n) \acts \{1,\dots,j_{n}-1,j_n+1,\dots,d\}$ there is $h_{n+1} \in \text{Fix}_{U(F)}(T_d \setminus Y)$ so that $\phi(x)$ and $\phi((h_{n+1} \cdots h_1) \cdot y)$ agree on the first $n+1$ entries.

Setting $g_n = h_n \cdots h_1 \in \text{Fix}_{U(F)}(T_d \setminus Y)$, we then have convergence $g_n \cdot y \to x$. This proves that the action is minimal.
\end{proof}

\begin{proof}[Proof of \thref{stronglyNonFree}]
Fix a vertex $b$. Then for each $n \geq 1$, the complement $T_d \setminus B_n(b)$ is a disjoint union of finitely many (in fact $d(d-1)^{n-1}$) half-trees $Y_1, \dots , Y_k$. Note that $\partial Y_1, \dots, \partial Y_k$ form an open cover of $\partial T_d$.

Since the union $\bigcup L_n$ is equal to $ U(F)$, then there is some $N$ so that $L_N$ contains a hyperbolic element $h$, and since $L_N$ is assumed to be open, then it must contain the fixator $\text{Fix}_{U(F)}(B_n(b))$ for some $n \geq 1$.

Let $a_h \in \partial T_d$ be the attracting point of $h$, and without loss of generality assume it is in $\partial Y_1$. Let $x \in \partial Y_i$ be any other end, then by \thref{minimal} we may assume $x$ is not the repelling point of $h$. Hence we may find $m$ large enough that $h^m x \in \partial Y_1$ as $\partial Y_1$ is open, and again by \thref{minimal} we may send this end to $a_h$.

This proves the action of $L_N$ is transitive on $\partial T_d$, a similar argument shows that the set of ends arising as attracting points of hyperbolic elements in $L_N$ is all of $\partial T_d$. Since any $L_N$-invariant subtree must contain the axes of all hyperbolic elements, then there are no proper $L_N$-invariant subtrees. This proves $L_N$ is geometrically dense.
\end{proof}

The following is a concrete example of a group satisfying the hypotheses of \thref{stronglyNonFree}.

\begin{exam}\label{example:alternating}
For $d \geq 4$, the alternating subgroup $A_d \leq S_d$ satisfies the hypotheses of the previous result. Indeed, fix $l \in \{1,\dots,d\}$ and pick distinct $i,j \in \{1,\dots, l-1, l+1, \dots ,d\}$. Then, since $d$ is at least 4, there is $k \neq i,j,l$ hence the 3-cycle $(ijk)$ fixes $l$ and is in $A_d$. Moreover, $(ijk)$ sends $i$ to $j$, proving that the point stabilizer of $A_d$ at $l$ is transitive.

By \thref{stronglyNonFree}, $U(A_d)$ cannot be written as a union of open subgroups each having open amenable radical. This means that \thref{maintheorem} does not apply to $U(A_d)$.
\end{exam}

\bibliography{references}
\bibliographystyle{amsplain}

\end{document}